\title{Modularity of regular and treelike graphs}
\author{Colin McDiarmid and Fiona Skerman}
\newcommand{\Gr}{G_{n,r}}
\DeclareMathAlphabet{\mathcal}{OMS}{cmsy}{m}{n}
\newcommand{\midarrow}{\tikz \draw[-triangle 90] (0,0) -- +(.1,0);}
\newcommand{\cA}{\mathcal A}
\newcommand{\cC}{\mathcal C}
\newcommand{\cG}{\mathcal G}
\newcommand{\eps}{\varepsilon}
\newcommand{\ds}{{\rm vol}}
\renewcommand{\deg}{{\rm deg}}
\newcommand{\tw}{{\rm tw}}
\newcommand{\bw}{{\rm bw}}
\newcommand{\q}{q^*}
\newtheorem{thm}{Theorem}
\newtheorem{theorem}[thm]{\sc Theorem}
\newtheorem{proposition}[thm]{\sc Proposition}
\newtheorem{cor}[thm]{\sc Corollary}
\newtheorem{lemma}[thm]{\sc Lemma}
\newtheorem{conj}[thm]{\sc Conjecture}
\newtheorem{fact}[thm]{\sc Fact}
\begin{document}
\maketitle

\begin{abstract}
{Clustering algorithms for large networks typically use modularity values to test which partitions of the vertex set better represent structure in the data.  The \emph{modularity} of a graph is the maximum modularity of a partition. We consider the modularity of two kinds of graphs.

For $r$-regular graphs with a given number of vertices, we investigate the minimum possible modularity, the typical modularity, and the maximum possible modularity.  In particular, we see that for random cubic graphs the modularity is usually in the interval $(0.666, 0.804)$, and for random $r$-regular graphs with large $r$ it usually is of order $1/\sqrt{r}$.
These results help to establish baselines for statistical tests on regular graphs.}
 
The modularity of cycles and low degree trees is known to be close to 1: we extend these results to `treelike' graphs, where the product of treewidth and maximum degree is much less than the number of edges.  This yields for example the (deterministic) lower bound $0.666$ mentioned above on the modularity of random cubic graphs.
\end{abstract}

\section{Introduction and Statement of Results} 
The recently greater availability of data on large networks in many fields has led to increasing interest in techniques to discover network structure. In the analysis of these networks, clusters or communities found using modularity optimisation have become a focus of study. Thus we need benchmarks to assess the statistical significance of observed community structure~\cite{trulymodular}.

Further, the popularity of modularity-based clustering techniques~\cite{fortunato2016community,popular} and the link to the Potts model in statistical physics~\cite{reichardt2006statistical} have prompted much research into the modularity of graphs from various classes. The asymptotic value of the modularity of each of the following graph classes has been shown to approach the maximum value 1; cycles~\cite{nphard}, low degree trees~\cite{bagrow,modgraphclasses} and lattices~\cite{GPA04}.

In this paper we focus on the (maximum) modularity $q^*(G)$ of a graph $G$ (precise definitions are given later) from one of two natural related and contrasting areas, namely regular graphs and treelike graphs.

We think of a graph as \emph{treelike} if by deleting a few edges we may obtain a graph with low treewidth (treewidth measures how much we have  to `fatten' a tree to contain the graph). We show that if a graph $G$ with many edges has low maximum degree, and by deleting a small proportion of its edges we may obtain a graph with low treewidth, then $G$ has high modularity.  This result much extends the results mentioned earlier about cycles and trees; it shows that random planar graphs have modularity asymptotically 1; and it shows that every cubic (3-regular) graph has modularity at least about $2/3$.

For $r$-regular graphs with a given number $n$ of vertices, we investigate the minimum possible modularity, the typical modularity, and the maximum possible modularity.  For example, consider a random cubic graph $G_{n,3}$.  Locally, looking out a fixed distance from a random vertex, it is a tree with high probability (whp), though globally it is far from treelike.  We shall see that $q^*(G_{n,3}) \leq 0.804$ whp,
and simulations suggest that the value may typically not be much above the deterministic lower bound of about~$2/3$. 
In fact, we consider random $r$-regular graphs $G_{n,r}$ for each $r$ from 3 to 12 (see Table~\ref{tab:rreg}); and also show that when $r$ is large $q^*(\Gr)$ is contained whp in an interval that scales with $1/\sqrt{r}$.


\subsection{Modularity of a graph}
The definition of modularity was first introduced by Newman and Girvan in~\cite{NewmanGirvan}. Many or indeed most popular algorithms used to search for clusterings on large datasets are based on finding partitions with high modularity~\cite{fortunato2016community,popular}. See~\cite{fortunato2010community,porter2009communities} for surveys on community detection including modularity based methods.

In order to define the modularity of a graph $G$, we first define the modularity $q_\cA(G)$ for a partition $\cA$ of its vertex set.  This is a measure designed to score highly when most edges fall within the parts but to be penalised when some parts have large sums of degrees.
Denote the number of edges in the subgraph induced by vertex set~$A$ by $e(A)$, and let the \emph{volume $\ds (A)$ of $A$ be the sum of the degrees (in the whole graph $G$) of the vertices in $A$.} 

Let $G$ be a graph with $m\geq 1$ edges. For a vertex partition $\cA$ of~$G$, we define
\[ \begin{split}
q_\cA(G) & = 
\frac{1}{2m}\sum_{A \in \cA} \sum_{u, v \in A} \left( {\mathbf 1}_{uv\in E} - \frac{\deg(u) \deg(v)}{2m} \right)\\
&=
\frac{1}{m}\sum_{A \in \cA} e(A) - \frac{1}{4m^2}\sum_{A \in \cA} \ds (A)^2 \; = \;
q_\cA^E(G) - q_\cA^D(G),
\end{split} \]
where the \emph{edge-contribution} (or \emph{coverage}) $q_\cA^E(G)$ and the \emph{degree-tax}  $q_\cA^D(G)$ are given by
\[ q_\cA^E(G) = \frac{1}{m}\sum_{A \in \cA} e(A) \;\;\; \mbox{ and } \;\;\; q_\cA^D(G) =    \frac{1}{4m^2}\sum_{A \in \cA} \ds (A)^2. \]
The \emph{modularity} $\q(G)$ of the graph $G$ is defined by $\q(G)=\max_{\cA}(G)$, where the maximum is over all partitions $\cA$ of the vertex set.
\smallskip

By definition we have $0 \leq \q(G) <1$ for each non-trivial graph $G$. For example, complete graphs, stars and more generally all complete multipartite graphs have modularity~0 (as noted in \cite{nphard}, \cite{modgraphclasses} and~\cite{bolla2013largest,majstorovic2014note} respectively); and it was shown recently~\cite{ERus,trajanovski2012maximum} that near-complete graphs also have modularity 0.  At the other extreme, the $n$-cycle $C_n$ has modularity near 1, see~(\ref{eqn.Cn}) below. A graph $G$ with no edges is defined to have modularity $1$ for any partition $\cA$~\cite{nphard}.


\subsection{Modularity of regular graphs}
\label{subsec.regmod}

Let $\cG(n,r)$ denote the set of all $r$-regular graphs with $n$ vertices, say with vertex set $\{1,\ldots,n\}$.  
It is easy to see that this set is non-empty if and only if $n \geq r+1$ and $rn$ is even.  Assume that this condition holds (as we shall often do implicitly).
Define $q_r^-(n)$ to be the minimum modularity $\q(G)$ for an $n$-vertex $r$-regular graph $G$, that is
\[ q_r^-(n) = \min \{ \q(G) : G \in \cG(n,r) \} \]
and similarly let 
\[ q_r^+(n) = \max \{ \q(G) : G \in \cG(n,r) \}. \]
Also let $\Gr$ 
 denote a random graph sampled uniformly from $\cG(n,r)$.  We shall be particularly interested in the behaviour of the random variable $\q(\Gr)$.
When an event holds with probability tending to 1 as $n\rightarrow \infty$ we say that it holds \emph{with high probability} (\emph{whp}). 
Our main results on regular graphs are Theorems~\ref{thm.rlarge-min},~\ref{thm.kwus} and~\ref{thm.rlarge-random}.
Let us consider first $q_r^-(n)$ and $\q(\Gr)$, which are closely related, and then $q_r^+(n)$.

To introduce the results, we start with the simpler cases $r=1$ and $r=2$.
The case $r=1$ is trivial, as an $n$-vertex 1-regular graph $G$ must consist of $n/2$ disjoint edges. Since each part in an optimal partition must induce a connected graph, it is easy to check that the unique optimal partition has one part for each edge, and $q^*(G)= 1-2/n$.
The case $r=2$ is not trivial, though it is simpler than for $r \geq 3$: we give it a separate subsection.


\subsubsection{2-regular graphs} 
\label{subsec.r2}
 
For the minimum modularity $q_2^-(n)$, note first that $\tfrac{5}{\sqrt{6}} \approx 2.041$. 
\begin{proposition} \label{prop.minq}
The minimum modularity $q_2^-(n)$ satisfies 
$q_2^-(n) = 1- \tfrac{5}{\sqrt{6n}} +O(\tfrac1{n})$. 
\end{proposition}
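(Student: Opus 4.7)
The plan is to reduce to a one-variable optimisation over cycle lengths. A $2$-regular graph $G$ is a disjoint union of cycles $C_{n_1},\dots,C_{n_k}$ with $\sum_i n_i = n$ (and here $m=n$). First I would observe that any optimal partition (i) places each part inside a single cycle component and (ii) makes that part a contiguous arc: splitting a disconnected part into its connected pieces preserves the edge-contribution but strictly decreases the degree-tax (by convexity of $x\mapsto x^2$). So it suffices to describe a partition by a choice, for each~$i$, of a number $j_i\ge 1$ of arcs, taken as equal in size as possible (since $\sum_\ell a_{i,\ell}^2$ is then minimized).

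A direct calculation yields
\[
\q(G)\;=\;1-\sum_{i}L(n_i)\;+\;O(1/n),
\]
where $L(x)=\min_{j\ge 1}L_j(x)$ with $L_1(x)=(x/n)^2$ (the case $j_i=1$ has no cut-edge penalty, only the degree-tax $\lambda_i^2$) and $L_j(x)=j/n+x^2/(jn^2)$ for $j\ge 2$. The minimising $j$ jumps at $x=2\sqrt{n}$ (transition $1\to 2$) and at $x=\sqrt{j(j+1)n}$ (transition $j\to j+1$, $j\ge 2$), making $L$ a continuous, piecewise-quadratic function of $x\ge 3$.

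The key step is the scale-free bound $L(x)\le (5/\sqrt{6})\,x/n^{3/2}$ for all admissible~$x$. On each piece $L(x)/x$ is a convex function of $x$, so its piecewise maximum is attained at an endpoint; evaluating at the transition points $x=\sqrt{j(j+1)n}$ (with $j\ge 2$) gives the value $(2j+1)/(\sqrt{j(j+1)}\,n^{3/2})$, while at the atypical transition $x=2\sqrt{n}$ the value is only $2/n^{3/2}$. Since $j\mapsto(2j+1)/\sqrt{j(j+1)}$ is decreasing in $j\ge 2$, the global maximum of $L(x)/x$ equals $5/(\sqrt{6}\,n^{3/2})$, attained at $x=\sqrt{6n}$ (the $j=2\to 3$ transition). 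Summing,
\[
\sum_i L(n_i)\;\le\;\frac{5}{\sqrt{6}\,n^{3/2}}\sum_i n_i\;=\;\frac{5}{\sqrt{6n}},
\]
which yields the lower bound $\q(G)\ge 1-5/\sqrt{6n}-O(1/n)$ for every $G\in\cG(n,2)$.

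For the matching upper bound I would exhibit an explicit extremal graph. When $n=6\ell^2$ with $\ell\in\mathbb{Z}_{\ge 1}$, take $G^{*}$ to be $\ell$ disjoint copies of $C_{6\ell}$; then either $j_i=2$ (arcs of size $3\ell$) or $j_i=3$ (arcs of size $2\ell$) gives modularity exactly $1-5/(6\ell)=1-5/\sqrt{6n}$, and by the lower bound this is optimal, so $\q(G^{*})=1-5/\sqrt{6n}$. For general~$n$ one takes the closest admissible configuration (cycles of length near $\sqrt{6n}$, with a short tail cycle absorbing leftover vertices) and verifies that the perturbation contributes only $O(1/n)$. The main obstacle lies precisely in this final bookkeeping: confirming that all the integer-rounding errors (arcs of two sizes when $j_i\nmid n_i$, cycle length close to but not equal to $\sqrt{6n}$, total vertex count constrained to be $n$) combine into an $O(1/n)$ term rather than contaminating the leading $5/\sqrt{6n}$ asymptotic.
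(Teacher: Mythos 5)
Your approach matches the paper's almost exactly: restrict to balanced contiguous arcs within each cycle, pass to the continuous approximation $L_j(x)$ (the paper's $f_k(t)$), bound the unit cost $L(x)/x$ by its global maximum $\tfrac{5}{\sqrt{6}}n^{-3/2}$ attained at $x=\sqrt{6n}$ (the paper's Claim~A for $g_*$), and realize this value with cycles of length near $\sqrt{6n}$. The bookkeeping you flag at the end is precisely the content of the paper's Claims~B--E, but it is less delicate than you fear. For the direction $\q(G)\ge 1-\tfrac{5}{\sqrt{6n}}-O(1/n)$ for every $G$: using $j_i$ balanced arcs on a cycle of length $n_i$ has true cost at most $L_{j_i}(n_i)+j_i/(4n^2)$, and if $j_i$ is chosen to minimise $L_j(n_i)$ then $j_i=1$ (zero error) when $n_i<2\sqrt{n}$, while otherwise $j_i=O(n_i/\sqrt{n})$ and such cycles number $O(\sqrt{n})$, so the total rounding error is $O(n^{-3/2})\subseteq O(1/n)$. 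For the extremal construction, the paper checks $g_2'$ and $g_3'$ are both $\Theta(n^{-2})$ at $t=\sqrt{6n}$, so a cycle of length $\sqrt{6n}+O(1)$ has unit cost $\tfrac{5}{\sqrt{6}}n^{-3/2}-O(n^{-2})$, and a collection of about $\sqrt{n/6}$ such cycles (lengths adjusted by $O(1)$ to sum to $n$) gives $1-\q(G)\ge\tfrac{5}{\sqrt{6n}}-O(1/n)$, which closes the argument.
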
 


\begin{proposition}\label{prop.2reg}
The random 2-regular graph $G_{n,2}$ satisfies
\[ \q(G_{n,2}) = 1- \tfrac{2}{\sqrt{n}} + o(\tfrac{\log^2n}{n}) \;\; \mbox{ whp}.\]
\end{proposition}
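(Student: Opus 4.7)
The plan is to reduce this to a deterministic optimisation over the cycle lengths of $G_{n,2}$ (a 2-regular graph is a vertex-disjoint union of cycles $C_{n_1},\ldots,C_{n_L}$ with $\sum_i n_i = n$ and $m = n$) together with a probabilistic estimate on those lengths. An optimal partition $\cA$ can be taken to refine the cycle decomposition, since a part meeting two distinct cycles has degree tax $(\ds A_1 + \ds A_2)^2 > (\ds A_1)^2 + (\ds A_2)^2$, and within each cycle we may assume each part is an arc (else $\sum e(A) < n_i - k_i$ where $k_i$ is the number of parts). Partitioning $C_{n_i}$ into $k_i$ arcs of sizes within $1$ of $n_i/k_i$ gives
\[ q_{\cA_i} \;=\; \tfrac{n_i-k_i}{n} - \tfrac{n_i^2}{k_i n^2} + O\!\big(\tfrac{k_i}{n^2}\big). \]
Writing $f_i(k) := (n_i - k)/n - n_i^2/(k n^2)$, the concave maximum over real $k > 0$ sits at $k_i^* = n_i/\sqrt n$ with $f_i(k_i^*) = n_i/n - 2 n_i /n^{3/2}$, so $\sum_i f_i(k_i^*) = 1 - 2/\sqrt n$ regardless of how $n$ is split among the cycles.

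Now split cycles into $L = \{i : n_i \geq \sqrt n\}$ (large) and $S = \{i : n_i < \sqrt n\}$ (small), and set $n_S := \sum_{i \in S} n_i$. For the upper bound, the inequalities $\sum e(A) \leq n_i - k_i$ and $\sum|A|^2 \geq n_i^2/k_i$ (valid for any partition of $C_{n_i}$ into $k_i$ nonempty parts) give $q_{\cA_i} \leq f_i(k_i)$, and maximising over integers $k_i \geq 1$ gives $f_i(k_i^*)$ on $L$ and $f_i(1)$ on $S$, assembling to $\q(G_{n,2}) \leq 1 - 2/\sqrt n + 2n_S/n^{3/2}$. For the lower bound I take $k_i = 1$ on small cycles and the nearest integer to $n_i/\sqrt n$ on large cycles: Taylor expansion of $f_i$ around $k_i^*$ (where $|f_i''(k_i^*)| = 2/(n_i\sqrt n)$) bounds the integer-rounding loss by $O(1/(n_i\sqrt n)) = O(1/n)$ per large cycle, while the identity $f_i(k_i^*) - f_i(1) = (n_i/n - 1/\sqrt n)^2 \leq 1/n$ bounds each small-cycle loss. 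Hence
\[ \q(G_{n,2}) \;\geq\; 1 - \tfrac{2}{\sqrt n} - O\!\Big(\tfrac{|L|+|S|}{n}\Big) - O(n^{-3/2}). \]

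The remaining ingredient is probabilistic: I need control of the cycle spectrum of a uniform $G_{n,2}$ whp. The number of $k$-cycles is asymptotically Poisson with mean $1/(2k)$, and by standard configuration-model / second-moment estimates the total cycle count is $O(\log n)$ whp (giving $|L|+|S| = O(\log n)$), while $\mathbb{E}\, n_S = \sum_{k=3}^{\lfloor\sqrt n\rfloor} k\cdot \tfrac{1}{2k}(1+o(1)) = \tfrac12\sqrt n + O(1)$ concentrates to $n_S = O(\sqrt n \log n)$ whp. Substituting, upper and lower bounds meet at $1 - 2/\sqrt n + O(\log n / n)$, which is $1 - 2/\sqrt n + o(\log^2 n / n)$. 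The main obstacle is this last step: pushing the Poisson approximation for cycle counts up to the threshold length $\sqrt n$ with sharp enough concentration on $n_S$. The deterministic optimisation is essentially Proposition~\ref{prop.minq}'s single-cycle calculation replayed cycle-by-cycle, and requires only routine calculus once the reduction to arc partitions within each cycle is in place.
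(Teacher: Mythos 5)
Your approach is genuinely different from the paper's. The paper's proof is a two-lemma reduction: a stability lemma (Lemma~\ref{lem.smalldiff}) showing that two graphs on the same vertex set with the same degree sequence and symmetric difference of edge sets of size $k$ satisfy $|\q(G)-\q(G')|\le k/(2m)$, plus the estimate (Lemma~\ref{lem.fewcycles}) that $G_{n,2}$ has expected cycle count $O(\log n)$. Since a 2-regular graph with $c$ cycles can be turned into $C_n$ by cutting and rewiring at most $2c$ edges, whp $|\q(G_{n,2})-\q(C_n)|=O(\omega(n)\log n/n)$, and invoking the known formula $\q(C_n)=1-2/\sqrt n+O(1/n)$ with $\omega(n)=o(\log n)$ finishes. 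You instead redo the single-cycle optimisation on the whole cycle decomposition; the payoff is a self-contained argument that does not appeal to the cited $\q(C_n)$ result, and the identity $\sum_i f_i(k_i^*)=1-2/\sqrt n$ independent of the cycle lengths $n_i$ is a clean way to see the leading term emerge.

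Two small points to repair. First, the inequality $\sum_A e(A)\le n_i-k_i$ fails for $k_i=1$: a whole cycle $C_{n_i}$ contributes $n_i/n - n_i^2/n^2 = f_i(1)+1/n$, strictly more than $f_i(1)$, so $q_{\cA_i}\le f_i(k_i)$ is false for exactly the choice you propose on $S$. The fix: for every cycle, $q_{\cA_i}\le\max\bigl(f_i(k_i^*),\,f_i(1)+1/n\bigr)$, and $f_i(1)+1/n-f_i(k_i^*) = n_i(2\sqrt n - n_i)/n^2 \le 2n_i/n^{3/2}$, positive only when $n_i<2\sqrt n$; so the upper bound excess over $1-2/\sqrt n$ is at most $2n_{S'}/n^{3/2}$ with $n_{S'}$ summing cycle lengths below $2\sqrt n$ rather than $\sqrt n$ — same order, same conclusion. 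Second, the ``main obstacle'' you name is not actually one: no sharp concentration of $n_S$ is required. A first-moment bound gives $\mathbb{E}[n_S]=O(\sqrt n)$, and Markov with $\omega(n)=o(\log n)$ already yields $n_S=O(\omega(n)\sqrt n)$ whp; likewise the lower bound only needs the total cycle count to be $O(\omega(n)\log n)$ whp, again just Markov on Lemma~\ref{lem.fewcycles}. So the probabilistic ingredient is no harder than what the paper itself uses.
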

For the $n$-cycle $C_n$, by~\cite{nphard} [Theorem~6.7] (see also the comments following Proposition~\ref{prop.connub} below), we have 
\begin{equation} \label{eqn.Cn}
\q(C_n) = 1- \tfrac{2}{\sqrt{n}} + O(\frac1{n}). 
\end{equation}
Thus whp $\q(G_{n,2})$ is extremely close to $\q(C_n)$. Propositions~\ref{prop.minq} and~\ref{prop.2reg} show that whp $\q(G_{n,2})$ 
is very close to the minimum possible value $q_2^-(n)$, indeed whp
\[ \q(G_2) - q_2^-(n) \sim  \tfrac{2}{\sqrt{n}} - \tfrac{5}{\sqrt{6n}} \approx \tfrac{0.04}{\sqrt{n}}.\]

It is easier to determine the maximum modularity $q_2^+(n)$.  For example,
$q_2^+(n) = 1-\tfrac{3}{n}$
when $n$ is divisible by 3, and is attained by $n/3$ disjoint triangles.
Indeed we have a full story: 
\begin{proposition}\label{prop.2regplus}
For each $n \geq 3$
\begin{equation}\label{eq.maxq2} 
q_2^+(n) = 
\begin{cases}
1-\tfrac{3}{n} & \mbox{ if } n \equiv 0 \mod{3}\\
1-\tfrac{3}{n} - \frac{4}{n^2} & \mbox{ if } n \equiv 1 \mod{3}\\
1-\tfrac{3}{n} - \frac{8}{n^2} & \mbox{ if } n \equiv 2 \mod{3}
\end{cases}
\end{equation}
For $n \geq 3$, the maximum value is attained if and only if:
$G$ is a disjoint union of $n/3$ copies of $C_3$ when $n \equiv 0 \mod{3}$;
$G$ is a disjoint union of one $C_4$ and $(n-4)/3$ $C_3$'s when $n \equiv 1 \mod{3}$; and
$G$ is a disjoint union of two $C_4$'s and $(n-4)/3$ $C_3$'s when $n \equiv 2 \mod{3}$ and $n \geq 8$, and $G$ is a five cycle $C_5$ when $n=5$.
\end{proposition}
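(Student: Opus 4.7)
The plan is to reduce the proposition to a small integer optimisation over cycle decompositions of $n$, since a $2$-regular graph on $n$ vertices is precisely a disjoint union of cycles $C_{\ell_1}, \ldots, C_{\ell_t}$ with each $\ell_i \geq 3$ and $\sum_i \ell_i = n$. I first argue that at the optimum every part $A \in \cA$ lies inside a single cycle component of $G$ and induces a connected subgraph (so each part is either a proper arc of some cycle or a full cycle). Both reductions follow from the standard splitting argument: if a part $A = A_1 \sqcup A_2$ is disconnected, then splitting it in two leaves the edge-contribution unchanged and strictly reduces the degree-tax via $(|A_1|+|A_2|)^2 > |A_1|^2 + |A_2|^2$.

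Under this reduction, using $m=n$ and $\ds(A) = 2|A|$, and observing that $e(A) = |A|$ when $A$ is a whole cycle component while $e(A) = |A|-1$ when $A$ is a proper arc, modularity takes the clean form
\[ q_\cA(G) \;=\; 1 \;-\; \frac{P}{n} \;-\; \frac{S}{n^2}, \]
where $P$ counts the arc-parts of $\cA$ and $S = \sum_{A \in \cA} |A|^2$. The optimisation therefore decouples across cycle components: each $C_{\ell_i}$ contributes either $\ell_i^2/n^2$ (kept whole) or $m_i/n + \sum_j s_{i,j}^2/n^2$ (split into $m_i \geq 2$ arcs of sizes $s_{i,j}$ summing to $\ell_i$) to $P/n+S/n^2$, which we want to minimise.

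The main obstacle is a pair of structural claims. First, the optimal $G$ uses only cycles of lengths in $\{3,4,5\}$: given any $\ell \geq 6$, writing $\ell$ as a sum from $\{3,4,5\}$ using as many $3$s as possible (zero, one or two $4$s according to $\ell \bmod 3$) and replacing $C_\ell$ by those shorter cycles (each kept whole) gives a strictly smaller contribution than any treatment of $C_\ell$, whole or split, as a direct calculation shows. Second, for $n \geq 6$ every short cycle $C_\ell$ with $\ell \in \{3,4,5\}$ should be kept whole: a case-by-case check of the inequality $\ell^2 \leq mn + \min_{s_1+\cdots+s_m=\ell}\sum_j s_j^2$ for each $m \geq 2$ shows that the whole option wins, with the single exception $\ell=5$, $n=5$, where $G = C_5$ is forced and the $m=2$ split into arcs of sizes $3$ and $2$ is optimal, yielding $q^*(C_5) = 2/25$, which agrees with the formula.

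It remains to minimise $\sum_i \ell_i^2$ over decompositions with $\ell_i \in \{3,4,5\}$ and $\sum_i \ell_i = n$. A short case analysis on $n \bmod 3$ yields the three minima $3n$, $3n+4$, $3n+8$, hence the formula~\eqref{eq.maxq2}; uniqueness of the extremal graph in each case follows from strict inequalities in the key swaps (for instance, four $C_3$s beat three $C_4$s by $12$ in $\sum \ell_i^2$, and two $C_4$s beat $C_5 \cup C_3$ by $2$). The remaining small boundary cases $n \in \{3,4,6,7\}$ are confirmed by direct computation.
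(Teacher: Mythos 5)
Your proof is correct and follows essentially the same route as the paper's: reduce to disjoint unions of cycles with parts respecting components, rule out cycle lengths $\geq 6$ by replacing such a cycle with shorter whole cycles, then rule out $C_5$ components (except when $n=5$) and three or more $C_4$ components via the same swaps, and verify the resulting formula. The paper's version is terser; your explicit reformulation of the objective as $1 - P/n - S/n^2$ and the observation that it decouples over cycle components are implicit in the paper's argument.
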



\subsubsection{Minimum possible modularity $q_r^-(n)$}

For $r=3,\ldots,8$ the following deterministic result gives the best lower bound we know to hold whp for $\q(\Gr)$. (These lower bounds were originally proved in~\cite{McDSker} to hold whp using a Hamilton cycle construction.) It will follow quickly from Theorem~\ref{thm.tw} on treewidth and maximum degree.
\begin{proposition}\label{cor.allrreg}
For each $r\geq 2$,
\[ q_r^-(n) \geq \frac2{r} - 2 \sqrt{\frac6{n}} \]
for each possible value of $n$.
\end{proposition}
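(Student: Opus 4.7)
The strategy is to apply the forthcoming Theorem~\ref{thm.tw} on treewidth and maximum degree to a spanning forest $F$ of $G$. A typical $r$-regular graph has treewidth linear in $n$, so we cannot apply the theorem to $G$ directly; however, $F$ has treewidth at most $1$ and (for connected $G$) has $n-1$ edges, while $G$ itself has $m = rn/2$ edges. The ratio $e(F)/m$ equals $(n-1)/(rn/2) = 2/r - 2/(rn)$, which is already the leading term of the claimed bound. So the task reduces to showing that the error term from Theorem~\ref{thm.tw} matches $2\sqrt{6/n}$.

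I would invoke Theorem~\ref{thm.tw} in a form that asserts: if $G$ has maximum degree $\Delta$ and a spanning subgraph $H$ of treewidth $t$, then $\q(G) \geq e(H)/m$ minus an error depending on $n$, $t$ and $\Delta$. Specialising to $H = F$ (treewidth at most $1$) and $\Delta = r$, the error should come out to $2\sqrt{6/n}$. The inner mechanism expected: any tree of maximum degree $r$ admits a partition into connected subtrees of size between $s$ and $O(rs)$; such a partition cuts $O(n/s)$ forest edges (losing $O(2/(rs))$ in edge contribution) and yields parts of volume $O(rs)$, hence a degree tax of order $O(rs/n)$ because every vertex of $G$ has degree $r$ and so the tax depends only on part sizes. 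Balancing these two sources of loss with $s = \Theta(\sqrt{n/r})$ gives total error of order $1/\sqrt{n}$; a careful optimisation should recover exactly the constant $2\sqrt{6}$.

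The main obstacle is extracting this constant $2\sqrt{6}$ cleanly from Theorem~\ref{thm.tw}: once the error term is available as an explicit function of $\Delta$, $t$ and $n$, substituting $\Delta = r$ and $t = 1$ reduces the argument to arithmetic. A minor second subtlety is the case of disconnected $G$, which can be handled either by applying the argument within each connected component (itself $r$-regular on at least $r+1$ vertices) and combining the resulting partitions, or by noting that $r$-regular graphs with many components have modularity close to $1$, easily beating the bound $2/r - 2\sqrt{6/n}$.
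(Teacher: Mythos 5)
Your proposal is in the same spirit as the paper's proof: both deduce the proposition from Theorem~\ref{thm.tw} by deleting edges of $G$ to leave a low-treewidth spanning subgraph. But you take a spanning forest $F$ (treewidth $\leq 1$), whereas the paper takes, in each connected component $H$, a spanning tree \emph{plus one extra edge} --- a unicyclic spanning subgraph, which has treewidth $2$. That choice is not an accident: a unicyclic spanning subgraph of an $r$-regular graph (with $r \geq 2$, so every component contains a cycle) keeps exactly $n$ edges no matter how many components $G$ has, so $|E'| = m - n$ always and Theorem~\ref{thm.tw} directly gives
\[
\q(G) \;\geq\; 1 - 2\Big(\tfrac{3r}{m}\Big)^{1/2} - \Big(1 - \tfrac{n}{m}\Big) \;=\; \tfrac{2}{r} - 2\sqrt{\tfrac{6}{n}}
\]
with no case analysis.

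Your spanning-forest variant has a real gap hidden in the ``minor second subtlety.'' A forest on $c$ components keeps only $n-c$ edges, so $|E'| = m - n + c$, and the resulting bound is $2/r - 2c/(rn) - 4/\sqrt{n}$. This beats $2/r - 2\sqrt{6/n}$ only when $c \lesssim (\sqrt{6}-2)\,r\sqrt{n}$; since $c$ can be as large as $n/(r+1)$ (e.g.\ a disjoint union of $K_{r+1}$'s), the condition fails whenever $\sqrt{n} \gtrsim r(r+1)$, which is most of the range. So the disconnected case is not merely a formality --- the loss of $c$ edges genuinely swamps the error budget, and you would need a separate argument (using the components partition, or applying the theorem component by component and taking care that the degree tax in $\q$ is normalised by the global $n$ and $m$, not the per-component ones) to cover it. You also expect the optimisation to ``recover exactly the constant $2\sqrt{6}$''; in fact with $t=1$ and $|E'|=m-n+c$ you get $4/\sqrt{n}$ plus a $c$-dependent term, not $2\sqrt{6/n}$, so the constant does not come out as you anticipate. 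The paper's unicyclic choice is precisely what makes the arithmetic land cleanly and uniformly.
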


Now consider large $r$.
\begin{theorem}\label{thm.rlarge-min}
There is a constant $c>0$ such that, for each positive integer $r$,
\[ q_r^-(n) \geq c/\sqrt{r} \]
 for each sufficiently large $n$ (with $rn$ even).
\end{theorem}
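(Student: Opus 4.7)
The plan is to treat two regimes of $r$ separately. For bounded $r$, say $r \le r_0$ for an absolute constant $r_0$ to be chosen together with $c$, Proposition~\ref{cor.allrreg} already suffices: it gives $q_r^-(n) \ge 2/r - 2\sqrt{6/n}$, and since $2/r \ge c/\sqrt{r}$ whenever $r \le (2/c)^2$, it is enough to fix $c$ small, set $r_0 = (2/c)^2$, and take $n$ large enough for the error $\sqrt{6/n}$ to be absorbed. This disposes of the small-$r$ case, and also shows why we cannot simply push the treewidth approach through for all $r$: the bound $2/r$ decays faster than the target $c/\sqrt{r}$.

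For $r > r_0$ the bound $2/r$ falls below $c/\sqrt{r}$, so I would switch to spectral input. By the Alon--Boppana inequality, for each fixed $r$ and $n$ sufficiently large every $r$-regular graph $G$ on $n$ vertices has second adjacency eigenvalue $\lambda_2 \ge 2\sqrt{r-1}\,(1-o(1))$. Write $B = A - (r/n)J$ for the modularity matrix, with $J$ the all-ones matrix; then $B\mathbf{1} = 0$ and $B$ agrees with $A$ on $\mathbf{1}^\perp$, so the top eigenvalue of $B$ equals $\lambda_2$. For any $\pm 1$ vector $s$, the corresponding bipartition has modularity $s^\top B s/(4m)$. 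Taking a unit eigenvector $v$ of $B$ for $\lambda_2$, the continuous Rayleigh quotient gives $v^\top B v = \lambda_2 \|v\|^2$, and the aim is to round $v$ to a $\pm 1$ vector $s$ so that $s^\top B s \ge c'\lambda_2 n$ for some absolute $c' > 0$. Substituting into $\q(G) \ge s^\top B s/(4m)$ and using $4m = 2rn$ then yields $\q(G) \ge c'\lambda_2/(2r) = \Omega(1/\sqrt{r})$, as required.

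The main obstacle is this rounding step: for an adversarially concentrated eigenvector, plain sign-rounding can lose more than a constant factor of the Rayleigh quotient, which would destroy the $\sqrt{r}$ scaling. I would handle this in one of two standard ways --- either average the bipartition $s = \mathrm{sign}(v - t)$ over thresholds $t$ and apply a mean-value calculation (so that some threshold gives a bipartition of the required quality), or replace the bipartition by a balanced multipartition obtained via the higher-order Cheeger inequality applied to the Laplacian $L = rI - A$. Either route converts the Alon--Boppana lower bound on $\lambda_2$ into the required partition, losing only a constant (or at worst polylogarithmic) factor, which is harmless since the theorem constrains only the exponent on $r$.
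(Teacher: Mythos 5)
The algebra identifying $q_{\{S,\bar S\}}(G)$ with $s^\top B s/(4m)$ for $s\in\{\pm1\}^n$ and $B=A-(r/n)J$ is correct, as is the observation that $\lambda_{\max}(B)=\lambda_2(A)$. The small-$r$ splitting via Proposition~\ref{cor.allrreg} is also fine, though it turns out to be unnecessary. The problem is the rounding step, and it is not a technicality you can leave to ``standard ways'': it is precisely the heart of the matter, and the tools you name do not come close to giving the required bound.

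Concretely, you want a balanced $s\in\{\pm1\}^n$ with $s^\top Bs\geq c'\lambda_2 n$, equivalently a bisection with $e(S,\bar S)\leq rn/4 - c\sqrt r\,n$, equivalently a near-balanced set of conductance at most $1/2 - c/\sqrt r$. Cheeger-type rounding (single thresholds, sweep cuts, or the higher-order variant) of the normalized Laplacian produces a set of conductance $O(\sqrt{\nu_2})$ where $\nu_2 = 1-\lambda_2/r$. Feeding in Alon--Boppana, $\nu_2 \approx 1-2/\sqrt r$, so the Cheeger guarantee is $\sqrt{2\nu_2}\approx\sqrt2(1-1/\sqrt r)$, which exceeds $1$ for moderate $r$ and is therefore vacuous; it certainly does not deliver conductance below $1/2$, let alone $1/2-c/\sqrt r$, and it does not control balance. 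The square-root loss inherent to Cheeger is exactly wrong for this problem. Your fallback remark that a polylogarithmic loss would be harmless is also incorrect: the theorem asserts a bound for each fixed $r$ and all sufficiently large $n$, so any loss growing with $n$ kills the statement outright, and a $\mathrm{polylog}(r)$ loss would weaken $c/\sqrt r$ to something not implied by the claim.

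What the paper does instead is invoke a genuinely combinatorial result: Theorem~1.1 of Alon (1997, \emph{On the edge-expansion of graphs}) states that there is an absolute $c>0$ such that every $r$-regular $n$-vertex graph, for $n$ large enough, has $\bw(G)/n\leq r/4 - c\sqrt r$. Plugging a minimising bisection into the elementary inequality $\q(G)\geq \tfrac12 - 2\bw(G)/(rn) - \tfrac1{2n^2}$ immediately gives $\q(G)\geq 2c/\sqrt r - o(1)\geq c/\sqrt r$. Alon's theorem is not a corollary of Alon--Boppana plus eigenvector rounding; its proof is a separate argument (and the statement is uniform in $r$, so no case split is needed). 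If you want to salvage your plan you would essentially have to reprove Alon's bisection theorem, at which point the spectral detour buys you nothing.
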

For each $r \geq 3$, our best upper bounds on $q_r^-(n)$ will come from $\q(G_{n,r})$.


\subsubsection{Random modularity $\q(G(n,r))$}

How should we assess the statistical significance of clusters observed in regular networks? There has been recent interest in estimating the modularity of random graphs~\cite{frankearxiv,ERus,trulymodular,trajanovski2012maximum}. In order to tell if a given partition shows statistically significant clustering in a network it is natural to compare the modularity score to that of a corresponding random graph model~\cite{fortunato2016community,trulymodular}. We give results which bound the modularity of random $r$-regular graphs.  In 
Theorem~\ref{thm.kwus}, we consider small values of $r$; and improve results in~\cite{McDSker}.  After that, in Theorem~\ref{thm.rlarge-random} we consider larger values of $r$.

\begin{theorem}\label{thm.kwus}
For $r=3,\ldots, 12$, the modularity of a random $r$-regular graph $\Gr$ whp lies in the range indicated in Table~\ref{tab:rreg} (see also Figure 1). In particular, $0.666< q^*(G_{n,3}) < 0.804$ whp.  
\end{theorem}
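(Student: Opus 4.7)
The plan is to treat the lower and upper bounds of Table~\ref{tab:rreg} separately. The lower bounds are essentially deterministic, while the upper bounds require substantive probabilistic input.

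For the lower bounds (in particular $\q(G_{n,3}) > 0.666$ whp), I would simply invoke Proposition~\ref{cor.allrreg}, which gives the deterministic estimate $\q(G) \geq 2/r - 2\sqrt{6/n}$ for every $r$-regular graph $G$ on $n$ vertices. For $r=3$ this yields $\q(G_{n,3}) \geq 2/3 - 2\sqrt{6/n}$, exceeding $0.666$ for all sufficiently large $n$; the other entries of the table follow by substituting the corresponding value of $r$. No randomness is needed.

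The upper bounds are the main content. My first step would be the deterministic spectral bound $\q(G) \leq \lambda(G)/r$ for any $r$-regular $G$, where $\lambda(G)$ denotes the largest non-trivial adjacency eigenvalue in absolute value. This follows from the Expander Mixing Lemma: it gives $e(A) \leq r|A|^2/(2n) + \lambda|A|/2$ for each vertex subset $A$, and summing over the parts of a partition $\cA$ (using $\ds(A) = r|A|$) telescopes to $q_\cA(G) \leq \lambda/r$. Friedman's theorem then gives $\lambda(\Gr) \leq 2\sqrt{r-1} + o(1)$ whp, so $\q(\Gr) \leq 2\sqrt{r-1}/r + o(1)$ whp. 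This already yields the $O(1/\sqrt{r})$ scaling anticipated in Theorem~\ref{thm.rlarge-random}, but for small $r$ it is too weak: at $r=3$ it delivers only $\approx 0.943$, well above the claimed $0.804$.

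To reach the tabulated values for small $r$, I would work directly in the configuration model and analyse partitions by size profile. For a partition with part sizes $(n_1, \ldots, n_k)$, the expected within-part edge count is approximately $\sum_A rn_A^2/(2n)$, matching $m \cdot q_\cA^D$ so that $\mathbb{E}[q_\cA] \to 0$; a high-modularity partition must therefore exhibit an excess of within-part edges well above the mean. Standard concentration for configuration-model sums (via switching or a martingale inequality) bounds the probability of such an excess, and a union bound over a polynomial-sized net of profiles (obtained by rounding part-sizes to a suitable scale) completes the argument, with the tabulated numerical bound for each $r$ emerging from optimising over this discretised profile space. The main obstacle will be the union bound, since partitions are super-exponentially numerous; my plan is first to combine the spectral bound above with the degree-tax $q_\cA^D$ to rule out partitions that are either grossly unbalanced or consist of very many small parts, leaving only a manageable family of profiles to be handled by the concentration plus union bound argument.
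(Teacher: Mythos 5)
Your proposal has two substantive gaps, one on each side of the interval.

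\textbf{Lower bounds.} You claim all lower bound entries of Table~\ref{tab:rreg} follow by substituting $r$ into Proposition~\ref{cor.allrreg}, i.e.\ from the deterministic bound $\q(G) \geq 2/r - 2\sqrt{6/n}$. That is only true for $r=3,\ldots,8$. For $r=9,\ldots,12$ the tabulated values $0.226,\,0.214,\,0.204,\,0.196$ strictly exceed $2/r \approx 0.222,\,0.200,\,0.182,\,0.167$, so the deterministic bound is too weak. The paper instead uses the whp bisection-width bounds of D\'{\i}az, Serna and Wormald~\cite{diaz2007bounds} for random $r$-regular graphs (e.g.\ $\bw(G_{n,12}) \leq 1.823n$ whp) together with inequality~(\ref{eqn.qbw}), which relates $\q(G)$ to $\bw(G)$ via the bipartition $(U,\bar U)$. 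Your plan omits both ingredient~(\ref{eqn.qbw}) and the random-graph bisection input, so it cannot reach the stated constants for $r \geq 9$.

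\textbf{Upper bounds.} Here you correctly derive the spectral inequality, and in fact your intermediate step ``$e(A) \leq r|A|^2/(2n) + \lambda|A|/2$ for each $A$, which telescopes over $\cA$'' is exactly the content of Lemma~\ref{lem.ub1}: the telescoping shows $\q(G) \leq \beta(G) := \max_{S\neq\emptyset}\left\{\bar d(S)/r - |S|/n\right\}$, with $\beta(G) \leq \lambda/r$ a further step. You then correctly observe that passing all the way to $\lambda$ and applying Friedman is too lossy for small $r$. But your fallback --- a union bound over partition profiles with configuration-model concentration --- does not resolve the real difficulty: a ``profile'' (multiset of part sizes) does not determine a partition, so for each profile you would still face exponentially many candidate partitions, and a concentration bound for a fixed partition cannot control the maximum over all of them without an exponential union bound. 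The paper avoids this entirely by \emph{stopping} at the intermediate quantity $\beta(G)$ rather than weakening further to $\lambda/r$: Lemma~\ref{lem.alphabeta} rewrites $\beta(G) = 1 - \alpha(G)$ where $\alpha(G) = \min_{0 < |S| \leq n/2}\left\{|S|/n + e(S,\bar S)/(r|S|)\right\}$ is an edge-expansion quantity depending on \emph{single sets} $S$, not partitions. This collapses the union bound from super-exponentially many partitions to a first-moment computation over sets, which is precisely what the cited result of Kolesnik and Wormald~\cite{KolWorm} on $i_u(\Gr)$ supplies. Without the reduction to single sets your plan would essentially require reproving~\cite{KolWorm} and then facing the partition-to-set reduction anyway.
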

 The lower bounds given for $r=3$ to 8 in Table~\ref{tab:rreg} (and Figure~\ref{fig.qsim}) are deterministic bounds from Proposition~\ref{cor.allrreg}.

\begin{theorem}\label{thm.rlarge-random}
For each (fixed) integer $r \geq 3$, the random $r$-regular graph $\Gr$ satisfies
\[ \q(\Gr) < 2/\sqrt{r} \; \mbox{ whp}; \]
and there is a constant $r_0$ such that for each $r \geq r_0$
\[ \q(\Gr) > 0.7631/\sqrt{r} \; \mbox{ whp}. \]
\end{theorem}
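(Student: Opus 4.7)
The plan is to prove the upper and lower bounds separately, using spectral expansion for the upper bound and a balanced bipartition together with Parisi-type cut asymptotics for the lower bound.

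For the upper bound $\q(\Gr) < 2/\sqrt{r}$ whp, I first establish the deterministic inequality $\q(G) \leq \lambda_2(G)/r$ for any $r$-regular graph $G$, where $\lambda_2(G)$ is the second largest eigenvalue of the adjacency matrix. Given any partition $\cA = \{A_1,\ldots,A_k\}$, set $s = \tfrac{1}{n^2}\sum_i |A_i|^2$ and let $c$ be the number of edges between distinct parts; by $r$-regularity $q_\cA(G) = 1 - c/m - s$. The vector $\chi_{A_i} - (|A_i|/n)\mathbf{1}$ is orthogonal to the all-ones vector, so applying the Rayleigh quotient for the Laplacian $L = rI - A$ (whose smallest nonzero eigenvalue is $r - \lambda_2$) gives $e(A_i, V\setminus A_i) = \chi_{A_i}^T L \chi_{A_i} \geq (r-\lambda_2)\,|A_i|(n-|A_i|)/n$. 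Summing over $i$ yields $2c \geq (r-\lambda_2)\,n(1-s)$, hence $q_\cA(G) \leq (1-s)\lambda_2/r \leq \lambda_2/r$. Friedman's theorem then asserts that for fixed $r\geq 3$ and any $\eps>0$, $\lambda_2(\Gr) \leq 2\sqrt{r-1} + \eps$ whp; choosing $\eps$ small enough that $2\sqrt{r-1}+\eps < 2\sqrt{r}$ gives $\q(\Gr) < 2/\sqrt{r}$ whp.

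For the lower bound $\q(\Gr) > 0.7631/\sqrt{r}$ whp for $r$ large, I exhibit a balanced bipartition with a small cut. For $\{A,B\}$ with $|A|=|B|=n/2$, $r$-regularity gives $\ds(A) = \ds(B) = m$, so the degree-tax is exactly $1/2$ and $q_{\{A,B\}}(G) = 1/2 - e(A,B)/m$. Hence it suffices to find a balanced bipartition with $e(A,B) \leq rn/4 - \alpha n\sqrt{r}$ for some $\alpha > 0.3816$. This is precisely a min-bisection statement; Parisi-type asymptotics for ground states of the Ising model on sparse random graphs (Dembo--Montanari--Sen-style results, adapted to $\Gr$) give that, whp as $r$ grows, $\Gr$ admits a balanced bipartition with cut at most $rn/4 - (P^*/2)\,n\sqrt{r-1}\,(1-o(1))$, where $P^* \approx 0.7632$ is the Parisi ground-state constant of the SK model. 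The resulting partition has modularity $\geq P^*\sqrt{r-1}/r \cdot (1-o(1))$, which exceeds $0.7631/\sqrt{r}$ for all sufficiently large $r$.

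The main obstacle is the min-bisection asymptotic for $\Gr$. Max-cut results via the Parisi formula are well established, but modularity forces the min-bisection (equivalently, the \emph{anti-ferromagnetic} Ising ground state on $\Gr$). One deals with this either by invoking a form of the Dembo--Montanari--Sen result that applies directly to min-bisection on the configuration model, or by a symmetrization argument: noting that $\sigma \mapsto -\sigma$ preserves the distribution of the spin energy $\sum_{ij} A_{ij}\sigma_i\sigma_j$ and that near-extremal cuts on $\Gr$ are whp balanced, one can transfer the max-cut constant to the min-bisection side and recover the Parisi constant $P^*$ in the required form.
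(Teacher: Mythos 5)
Your proposal is correct and follows essentially the same approach as the paper: a spectral bound $\q(G) \leq \lambda_2(G)/r$ combined with Friedman's theorem for the upper bound, and a balanced-bipartition (bisection-width) argument combined with the Dembo--Montanari--Sen asymptotics for the lower bound. The only differences are minor: the paper's Lemma~\ref{lem.ub1} obtains $\q(G)\leq \lambda(G)/r$ via the expander mixing lemma rather than the Laplacian Rayleigh quotient you use (yours gives the slightly sharper $\lambda_2$ in place of $\max\{|\lambda_2|,|\lambda_n|\}$, which makes no difference once Friedman is invoked), and your hedging about whether the min-bisection form of the Dembo--Montanari--Sen result applies to $\Gr$ is unnecessary --- their Theorem~1.5 gives $\bw(\Gr)/n = r/4 - c'\sqrt{r}/2 + o(\sqrt{r})$ with $c' = 0.76321 \pm 0.00003$ directly, which is exactly what the paper cites.
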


\begin{table*}
\begin{centering}
 $
\begin{array}{lrlllllllllll}
&\;\;r=			&& 3& 4 & 5 & 6 & 7 & 8& 9& 10 & 11 & 12\\
\hline
\mbox{\small bounds}&\q(G_{n,r}) > 		
&& 0.666 & 0.499 & 0.399 & 0.333 & 0.285 & 0.249	 & 0.226 & 0.214	& 0.204 & 0.196 \\
&\q(\Gr) < 	
&& 0.804 & 0.684 & 0.603 & 0.544 & 0.499 & 0.463 &0.433 & 0.408 & 0.388 & 0.370 \\
\\
\mbox{\small simulations}\!\!& \mbox{\small Louvain} 	&& 0.679 & 0.531 & 0.440& 0.380 &0.343 &0.312&0.284\! &0.262&0.244&0.230\\
& \mbox{\small Reshuffle}  	&& 0.677 & 0.531 & 0.446& 0.391 &0.353 &0.326 &0.303 &0.285 &0.269 &0.256 \\
\end{array}
$
{\caption{Upper rows: lower and upper whp bounds on the modularity of random regular graphs $\Gr$, for Theorem~\ref{thm.kwus}. Lower rows: average modularity found in simulations on randomly generated $r$-regular graphs with $10000$ nodes, using the Louvain method~\cite{louvain} and a method `Reshuffle' from~\cite{martelot}, see Section~\ref{sec.sim}.
	 }\label{tab:rreg}}
\end{centering}
\end{table*} 

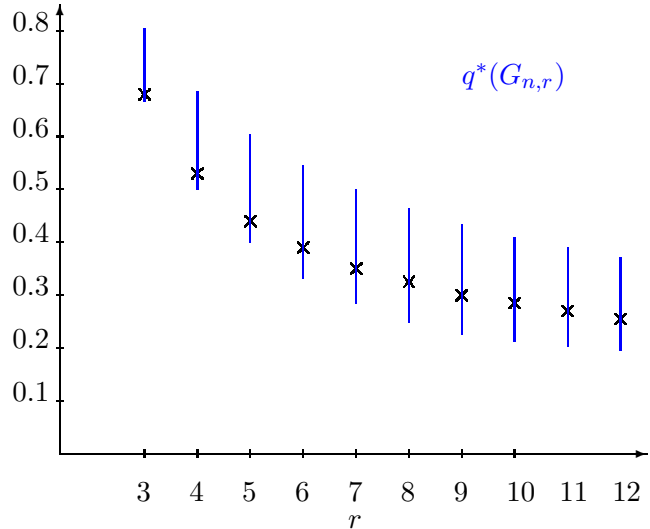
\begin{figure}
\begin{centering}

\begin{picture}(240,210)(20,-30)
\put(28,0){\vector(0,1){170}}
\put(28,0){\vector(1,0){222}}
\multiput(26.5,20)(0,20){8}{{\line(1,0){3}}}
\multiput(60,-1.5)(20,0){8}{{\line(0,1){3}}}
\put(10, 20){$0.1$}\put(10, 40){$0.2$}\put(10, 60){$0.3$}\put(10, 80){$0.4$}\put(10, 100){$0.5$}\put(10, 120){$0.6$}\put(10, 140){$0.7$}\put(10,160){$0.8$}  
\put(57, -18){$3$}\put(77, -18){$4$}\put(97, -18){$5$}\put(117, -18){$6$}\put(137, -18){$7$}\put(157, -18){$8$}\put(177, -18){$9$}\put(197, -18){$10$}\put(217, -18){$11$}\put(237, -18){$12$}

\color{blue}
\put(180, 140){$\q(\Gr)$}
\color{black}

\put(137, -28){$r$}


\put(60,136){\qbezier(-2,-2)(1,1)(2,2)\qbezier(2,-2)(1,-1)(-2,2)}
\put(80,106){\qbezier(-2,-2)(1,1)(2,2)\qbezier(2,-2)(1,-1)(-2,2)}
\put(100,88){\qbezier(-2,-2)(1,1)(2,2)\qbezier(2,-2)(1,-1)(-2,2)}
\put(120,78){\qbezier(-2,-2)(1,1)(2,2)\qbezier(2,-2)(1,-1)(-2,2)}
\put(140,70){\qbezier(-2,-2)(1,1)(2,2)\qbezier(2,-2)(1,-1)(-2,2)}
\put(160,65){\qbezier(-2,-2)(1,1)(2,2)\qbezier(2,-2)(1,-1)(-2,2)}
\put(180,60){\qbezier(-2,-2)(1,1)(2,2)\qbezier(2,-2)(1,-1)(-2,2)}
\put(200,57){\qbezier(-2,-2)(1,1)(2,2)\qbezier(2,-2)(1,-1)(-2,2)}
\put(220,54){\qbezier(-2,-2)(1,1)(2,2)\qbezier(2,-2)(1,-1)(-2,2)}
\put(240,51){\qbezier(-2,-2)(1,1)(2,2)\qbezier(2,-2)(1,-1)(-2,2)}

\color{blue}

\qbezier(60, 133.4)(60, 140)(60, 160.8)  
\qbezier(80, 100)(80, 120)(80, 136.8)     
\qbezier(100, 80)(100, 100)(100, 120.6) 
\qbezier(120, 66.6)(120, 100)(120, 108.8)    	
\qbezier(140, 57)(140, 90)(140, 99.8)      
\qbezier(160, 50)(160, 55)(160, 92.6)        
\qbezier(180, 45.2)(180, 50)(180, 86.6)      
\qbezier(200, 42.8)(200, 50)(200, 81.6)     
\qbezier(220, 40.8)(220, 50)(220, 77.6)     
\qbezier(240, 39.2)(240, 50)(240, 74)     

\end{picture}

\end{centering}

\caption{Simulation results for $n=10000$ nodes and proven theoretical bounds for random $r$-regular graphs with degrees $r=3,\ldots,10$. Each cross indicates the better computed modularity returned by two methods (see Section~\ref{sec.sim}), averaged over ten sampled graphs with $10000$ nodes. 
Theorem~\ref{thm.kwus} says that the modularity of a random regular graph $\Gr$ whp lies in the interval shown.}\label{fig.qsim}
\end{figure}

For large $r$ the theorem shows that the modularity of a random $r$-regular graph $\q(\Gr)$ whp tracks the minimum modularity $q^-_r(n)$, in the sense that both are of order $\Theta(1/\sqrt{r})$. 
For each given $r\geq 3$, it is an interesting open question how close $q^*(\Gr)$ typically is to $q_r^-(n)$. Can we construct $r$-regular graphs with modularity less than that for random graphs $\Gr$?

\subsubsection{Maximum possible modularity $q_r^+(n)$}

The maximum possible modularity $q_r^+(n)$ is easier to handle than $q_r^-(n)$ and $\q(\Gr)$, though it may be less important than them.  We have already discussed the much easier case $r=2$, and in Proposition~\ref{prop.2regplus} we gave a full story for this case. In Proposition~\ref{prop.maxqr} below, we see that in general we have $q^+_r(n) \leq 1-\frac{r+1}{n}$, and we define a function $g_r(n)$ such that $g_r(n) = 1 - \frac{r+1}{n} - O(1/n^2)$, and $q^+_r(n) = g_r(n)$ for sufficiently large~$n$.  Also, we identify the most modular $r$-regular graphs on $n$ vertices (apart from some small values). Proposition~\ref{prop.maxqr} contains and extends Proposition~\ref{prop.2regplus} except for the `$n$ sufficiently large' qualification.

These results complement the extremal results in~\cite{FortBart2008} and~\cite{trajanovski2012maximum}. The most modular connected graphs with a given number of edges were discussed in~\cite{FortBart2008}; and later~\cite{trajanovski2012maximum} investigated the most `$k$-modular' connected graphs parameterised by the number $m$ of edges and the number $k$ of parts, as well as the most 2-modular graphs parameterised by $m$ and the number $n$ of vertices.

\begin{proposition}\label{prop.maxqr}
Let $r \geq 2$ be an integer.

(a) The maximum modularity $q^+_r(n)$ of any $r$-regular graph on $n$ vertices satisfies
$$q^+_r(n) \leq 1-\frac{r+1}{n}$$
with equality achieved if and only if $r+1$ divides $n$; and for this case the unique way to attain the optimum is for the graph to consist of $n/(r+1)$ disjoint copies of $K_{r+1}$, with the connected components partition~$\cC$.  

(b) Given a positive integer $n$, write $n$ as $a(r+1)+b$ where $a$ is a non-negative integer and  $0 \leq b \leq r$, and let
\begin{equation} \label{eqn.qplus}
g_r(n) = 1-\frac{r+1}{n} - \frac{b\,(r+2+{\mathbf 1}_{r \mbox{ \small  odd}})}{n^2}.
\end{equation}
If $rn$ is even and $n$ is sufficiently large (for example, if $n \geq 9 r (r+1)^2$) then $q^+_r(n) = g_r(n)$, and this value is attained exactly for the graphs described in the proof, with the connected components partition~$\cC$.
\end{proposition}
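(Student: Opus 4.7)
Since $G$ is $r$-regular, $2m=rn$ and $\ds(A)=r|A|$ for every $A$, so for any vertex partition $\cA$ with $k_i:=|A_i|$ and $e_i:=e(A_i)$,
\[
q_\cA(G) \;=\; \frac{2}{rn}\sum_i e_i \;-\; \frac{1}{n^2}\sum_i k_i^2.
\]
Using the clique bound $e_i\leq\binom{k_i}{2}$ and the degree bound $e_i\leq\lfloor rk_i/2\rfloor$, set $f(k):=\min\{k(k-1),\,2\lfloor rk/2\rfloor\}$ and $h(k):=f(k)/(rn)-k^2/n^2$, so that $q_\cA(G)\leq\sum_i h(k_i)$ for \emph{every} $r$-regular graph $G$ on $n$ vertices and every partition $\cA$. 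A short calculation shows $h(k)/k$ is strictly increasing on $\{1,\dots,r+1\}$ and strictly decreasing on $\{r+1,r+2,\dots\}$, with unique maximum $(n-r-1)/n^2$ at $k=r+1$. Summing against $\sum k_i=n$ yields $q_\cA(G)\leq 1-(r+1)/n$, giving part~(a); equality forces every $k_i=r+1$ (hence $(r+1)\mid n$), each part to induce a $K_{r+1}$, and by $r$-regularity no inter-part edges, so $G$ is a disjoint union of $n/(r+1)$ copies of $K_{r+1}$ and $\cA$ is the components partition.

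For part~(b) with $b\geq 1$, set the deficit $d(k):=h(k)-k(n-r-1)/n^2\leq 0$, so $\sum h(k_i)=1-(r+1)/n+\sum d(k_i)$ and the task is to upper bound $\sum d(k_i)$ over partitions of $n$. Explicitly, $d(k)=(n-r)k(k-r-1)/(rn^2)$ for $1\leq k\leq r+1$ (so $|d(k)|=\Theta(1/n)$ for fixed $k<r+1$) and $d(k)=k(r+1-k)/n^2$ for $k\geq r+1$ when $r$ is even or $k$ is even (so $|d(k)|=\Theta(1/n^2)$), with an extra penalty $-1/(rn)$ when $r$ is odd and $k$ is odd. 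Taking $n\geq 9r(r+1)^2$, the $\Theta(1/n)$ deficits strictly dominate the $\Theta(1/n^2)$ ones, so a deficit-minimising partition has every $k_i\geq r+1$ and, for $r$ odd, no odd $k_i\geq r+2$. Writing $\epsilon_i:=k_i-(r+1)\geq 0$, having $k_i\geq r+1$ for all $i$ forces the number of parts to be $t\leq a$; any $t<a$ has excess $\sum\epsilon_i=b+(a-t)(r+1)$ strictly larger than $b$, which by convexity of $\epsilon\mapsto(r+1+\epsilon)\epsilon$ gives a strictly larger deficit. So $t=a$ and $\sum\epsilon_i=b$. Minimising $\sum_i(r+1+\epsilon_i)\epsilon_i$ under $\epsilon_i\geq 0$ integer (and even, if $r$ is odd) gives $b(r+2)$ for $r$ even (take $b$ values $\epsilon_i=1$) and $b(r+3)$ for $r$ odd (take $b/2$ values $\epsilon_i=2$, possible since $b$ is even when $r$ is odd, by parity of $rn$). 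Dividing by $n^2$ yields $q^+_r(n)\leq g_r(n)$.

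For the matching construction I take disjoint unions: for $r$ even, $(a-b)$ copies of $K_{r+1}$ together with $b$ copies of $K_{r+2}$ minus a perfect matching (an $r$-regular graph on $r+2$ vertices); for $r$ odd, $(a-b/2)$ copies of $K_{r+1}$ together with $b/2$ copies of some $r$-regular graph on $r+3$ vertices (which exist since $r(r+3)$ is even). In either case the components partition $\cC$ gives $q_\cC(G)=g_r(n)$. Uniqueness follows by tracing equality: $q_\cA=\sum h(k_i)$ forces each part to saturate the edge bound with no inter-part edges, while the optimisation above pins down the multiset $\{k_i\}$; combined with $r$-regularity this forces the described disjoint-union structure (up to the choice of $r$-regular graph on $r+3$ vertices when $r$ is odd, which is why the statement refers to ``the graphs described in the proof''). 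The main difficulty is the combinatorial optimisation in part~(b)---ruling out partitions with $t\neq a$ parts, handling the parity penalty for odd $r$, and pinning down the ``$n$ sufficiently large'' threshold explicit enough to imply formula~(\ref{eqn.qplus}).
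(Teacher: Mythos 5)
Your proof is correct and, for part~(a), is essentially the paper's argument in a different guise: the paper writes $1-q_\cA(G)=\sum_A \tfrac{|A|}{n}f_r(A,G)$ with $f_r(A,G)=e(A,\bar A)/(r|A|)+|A|/n$ and proves (Lemma~\ref{lem.fstar}) that $f_r(A,G)\geq (r+1)/n$ with equality iff $G[A]=K_{r+1}$ is a component; since for an $r$-regular graph $e(A,\bar A)=r|A|-2e(A)$, your quantity $h(k)/k$ is exactly $\tfrac1n(1-f_r)$ with $e(A)$ maximised subject to the clique and degree bounds, so the two key inequalities are the same. Where you genuinely diverge is part~(b). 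The paper's route after exhibiting the constructions $G_n^*$, $H_n^*$ is structural: it first shows no part in an optimal partition can have size $\geq 2(r+1)$, then that no \emph{component} can have size $\geq 6(r+1)$, then invokes a resolution-limit argument to show small components are never split, and finally uses convexity of $x\mapsto x^2$ on the connected-components partition to pin down the component sizes. You instead compress everything into a single integer-programming step: bound $q_\cA(G)\leq\sum h(k_i)$, pass to the deficit $d(k)$, and optimise $\sum d(k_i)$ over integer partitions of $n$. This is cleaner in that it never has to separately argue about components versus parts or invoke a resolution-limit inequality; on the other hand the paper's route is more self-contained in the book-keeping, while your argument quietly uses the lower bound $q_r^+(n)\geq g_r(n)$ (from the construction) to conclude that every $k_i$ in an optimal multiset is $O(r)$, which is needed to make the merging/comparison argument finite. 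Two small imprecisions worth noting: your claim that $h(k)/k$ is strictly decreasing on $\{r+1,r+2,\dots\}$ fails at steps from odd $k$ to even $k+1$ when $r$ is odd (because the parity penalty drops out), though the weaker and sufficient statement that $h(k)/k<(n-r-1)/n^2$ for all $k\neq r+1$ is still true; and the sentence ``any $t<a$ ... by convexity gives a strictly larger deficit'' conflates two effects (more excess and fewer parts) that need to be separated as you would do if writing this out in full. Neither affects correctness.
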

It will follow easily from the above result and its proof -- see the comment following (\ref{eqn.grngeq}) -- that for each $n\geq r(r+2)$ we have
\begin{equation}\label{eq.approxbestmod}
q^+_r(n)\geq 1- \frac{r+2}{n},
\end{equation}
so the upper bound (from part (a)) and the lower bound differ by at most $1/n$.
Perhaps the upper bound 
holds for a much wider range of graphs?  It seems likely that it holds for graphs with minimum degree at least $r$.  We go further, and make a plausible but more speculative conjecture.

\begin{conj}For each integer $r \geq 1$, if an $n$-vertex graph has average degree at least $r$, then $\q(G) \leq 1 - \frac{r+1}{n}$.
\end{conj}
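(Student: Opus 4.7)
For any vertex partition $\cA$ of $G$ with $k$ parts, let $C = m - \sum_{A \in \cA} e(A)$ denote the number of cross edges, so that $q_\cA^E(G) = 1 - C/m$. The conjectured bound $\q(G) \leq 1 - (r+1)/n$ is then equivalent to showing, for every partition $\cA$,
\[ \frac{C}{m} + q_\cA^D(G) \;\geq\; \frac{r+1}{n}. \]
I would split into cases according to the number of parts $k$.

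In the easy case $k \leq n/(r+1)$, Cauchy--Schwarz applied to $\sum_{A \in \cA} \ds(A) = 2m$ gives $q_\cA^D(G) \geq 1/k \geq (r+1)/n$, so the desired inequality holds from the degree-tax term alone. Now suppose $k > n/(r+1)$. Writing $T = \sum_{A \in \cA} |A|^2$, the bound $e(A) \leq \binom{|A|}{2}$ yields $C/m \geq 1 - (T-n)/(2m)$; combined with $q_\cA^D \geq 1/k$ and the Cauchy--Schwarz lower bound $T \geq n^2/k$, a short calculation using $2m \geq rn$ closes the inequality for partitions whose parts are roughly balanced (so that $T$ is close to $n^2/k$). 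Indeed, substituting $T = n^2/k$ into the combined bound, the required inequality reduces exactly to $k \geq n/(r+1)$.

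The hard part will be the remaining regime: partitions with $k > n/(r+1)$ that are very unbalanced. A single part of size close to $n$ can push $T$ well above $n^2/k$ and drive the cross-edge bound $1 - (T-n)/(2m)$ below zero. In this regime, however, the same dominant part also carries most of the volume, so the degree tax should dominate: I would refine $q_\cA^D \geq 1/k$ to $q_\cA^D \geq (\ds(A^*)/(2m))^2$ for the largest-volume part $A^*$ and combine this with the trivial $C/m \geq 0$. A further subtlety is the average-degree (as opposed to minimum-degree) hypothesis: low-degree or isolated vertices can be placed in large parts without adding to the tax, so I would split $V(G)$ into $L = \{v : \deg(v) < r\}$ and $H = V \setminus L$ and track their contributions to $T$ and $\ds(A)$ separately. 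A perhaps cleaner alternative would be a smoothing or edge-exchange argument showing that any graph attaining equality in the conjecture must be a disjoint union of copies of $K_{r+1}$, as in Proposition~\ref{prop.maxqr}; pinning down this extremal characterization is, in my view, the real heart of the conjecture.
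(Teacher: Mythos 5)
This statement is labeled as a \emph{conjecture} in the paper; the authors present it without proof (``We go further, and make a plausible but more speculative conjecture''), so there is no proof in the paper to compare your attempt against, and your attempt is itself explicitly incomplete, as you acknowledge.

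Assessing the attempt on its own terms: the balanced regime is handled correctly. For $k \leq n/(r+1)$ the degree-tax bound $q_\cA^D \geq 1/k \geq (r+1)/n$ suffices, and for $k > n/(r+1)$ with $T = \sum_{A}|A|^2$ close to $n^2/k$, the cross-edge estimate $C \geq m - (T-n)/2$ combined with $2m\geq rn$ and $q_\cA^D \geq 1/k$ indeed reduces to $k \geq n/(r+1)$, as you compute. But $T\geq n^2/k$ is a \emph{lower} bound, and the cross-edge estimate degrades as $T$ grows, so the computation is valid only where $T$ is actually near $n^2/k$ -- you flag this, but it means the argument covers only roughly balanced partitions. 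The real gap, which you correctly identify, is the unbalanced regime with many parts: a single large part containing predominantly low-degree vertices drives $T$ up (so the cross-edge bound can become vacuous) while contributing little to the degree tax, because tax is controlled by volume, not size. The proposed refinement $q_\cA^D \geq (\ds(A^*)/(2m))^2$ for the largest-volume part is not by itself strong enough: nothing forces the largest-volume part's share of $2m$ to exceed $\sqrt{(r+1)/n}$, and the $L/H$ decomposition you suggest is exactly where the average-degree (as opposed to minimum-degree or regularity) hypothesis must be exploited, but no mechanism for doing so is given. Your fallback -- an exchange argument characterizing equality by disjoint copies of $K_{r+1}$, paralleling Proposition~\ref{prop.maxqr} -- runs into the difficulty that Lemma~\ref{lem.fstar} leans hard on $r$-regularity to lower-bound $e(A,\bar A)$, and it is genuinely unclear how to adapt it when only the average degree is controlled; note also that characterizing the equality case is weaker than proving the inequality. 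In short, the plan is reasonable up to the point where it stops, but it stops precisely at the part that makes this an open conjecture.
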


\smallskip

The maximum modularity is considerably smaller if we consider only connected graphs.
\begin{proposition} \label{prop.connub}
Let the graph $G$ have $m \geq 1$ edges.  If $G$ is connected  then
\[ \q(G) \leq 1  - \tfrac{2}{\sqrt{m}} +\tfrac1{m} ,\]
and if $G$ is 2-edge-connected  then
\[ \q(G) \leq 1  - \tfrac{2}{\sqrt{m}}. \]
\end{proposition}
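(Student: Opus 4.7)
The plan is to upper bound the two terms of $q_\cA(G) = q_\cA^E(G) - q_\cA^D(G)$ separately for an arbitrary partition $\cA$ with $k$ parts, then optimize over $k$. For the degree-tax, Cauchy--Schwarz (equivalently, convexity of $x \mapsto x^2$) gives
\[ \sum_{A \in \cA} \ds(A)^2 \;\geq\; \frac{\bigl( \sum_A \ds(A) \bigr)^2}{k} \;=\; \frac{(2m)^2}{k}, \]
so $q_\cA^D(G) \geq 1/k$, regardless of any connectivity assumption.

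For the edge-contribution, the plan is to count cross-edges by a contraction argument. Let $H$ be the multigraph obtained from $G$ by contracting each part of $\cA$ to a single vertex; then $H$ has $k$ vertices and exactly $m - \sum_A e(A)$ edges, one for each cross-edge of $\cA$. If $G$ is connected then so is $H$, and hence $H$ has at least $k-1$ edges, giving $q_\cA^E(G) \leq 1 - (k-1)/m$. If $G$ is 2-edge-connected then every edge-cut of $H$ lifts to an edge-cut of $G$ of the same size, so $H$ is also 2-edge-connected; then (provided $k \geq 2$) every vertex of $H$ has degree at least 2, so $H$ has at least $k$ edges, giving $q_\cA^E(G) \leq 1 - k/m$.

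Combining the two bounds and treating $k$ as a real variable, in the connected case $q_\cA(G) \leq 1 - (k-1)/m - 1/k$; the right-hand side is concave on $k > 0$ with maximum at $k = \sqrt{m}$, yielding $\q(G) \leq 1 - 2/\sqrt{m} + 1/m$. In the 2-edge-connected case, for $k \geq 2$ we get $q_\cA(G) \leq 1 - k/m - 1/k$, again maximized at $k = \sqrt{m}$ to give $1 - 2/\sqrt{m}$, while the trivial partition ($k=1$) contributes $q_\cA = 0$, which is dominated by this bound whenever $m \geq 4$. The one subtle step, which I expect to be the main `obstacle', is verifying that contraction preserves 2-edge-connectedness; it follows from the observation that every bipartition of $V(H)$ corresponds via $\cA$ to a bipartition of $V(G)$ with exactly the same number of crossing edges, so a 1-edge-cut in $H$ would lift to a 1-edge-cut in $G$.
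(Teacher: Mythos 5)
Your proof is correct and follows essentially the same route as the paper: lower-bound the degree tax by $1/k$ via convexity, lower-bound the number of cross-edges by $k-1$ (connected) or $k$ ($2$-edge-connected), and then optimize over $k$ by AM--GM/calculus. Your contraction-to-a-$k$-vertex multigraph is a more explicit justification of the cross-edge count than the paper gives (the paper simply asserts "there must be at least $k$ cross-edges"), but it is the same underlying fact, and your handling of the trivial $k=1$ partition is also implicit in the paper.
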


The last result can be tight.  Since $C_n$ is 2-connected, it gives $\q(C_n) \leq 1- 2/\sqrt{n}$.  If $n=t^2$ for an integer $t \geq 2$, then partitioning into $t$ paths of $t$ vertices shows that $\q(C_n) = 1- 2/\sqrt{n}$.
In general, partitioning into $t= \lceil \sqrt{n} \rceil$ paths each with $t-1$ or $t$ vertices yields equation~(\ref{eqn.Cn}).


\subsection{Treewidth and maximum degree}
Bagrow makes a study of the modularity of some trees and treelike graphs in~\cite{bagrow}. He shows that Galton-Watson trees and $k$-ary trees have modularity tending to one. In~\cite{modgraphclasses} it is shown that any tree with maximum degree $\Delta(G)=o(n^{1/5})$ has asymptotic modularity one.  We shall see that this result extends to all trees with $\Delta(G)=o(n)$; and indeed it extends to all low degree graphs which are `treelike', in that they are `close' to graphs with low treewidth. This forms Theorem~\ref{thm.tw}, our main result in this section.

Treewidth is a central notion in the study of graphs and the design of algorithms~\cite{kleinberg2005textbook}: see~\cite{karb} for a survey. Let us recall the definitions. A \emph{tree-decomposition} of a graph $G=(V,E)$ is a pair consisting of a tree $T=(I, F)$ and a family $( X_i :  i \in I ) $ of subsets of $V$ (`bags'), one for each node $i$ of $T$, such that \vspace{-1mm}
\begin{enumerate}
	\item{} $\cup_{i \in I} X_i = V$
	\item{} for each edge $vw \in E$ there is a node $i \in I$ such that $v, w\in X_i$
	\item{} for all nodes $i,j,k\in I$, if $j$ is on the path between $i$ and $k$ in $T$, then $X_i \cap X_k \subseteq X_j$.
\end{enumerate}

The \emph{width} of a tree decomposition is $\max_{i \in I}  |X_i| - 1$; and the \emph{treewidth} $\tw(G)$ of a graph $G$ is the minimum width over all tree decompositions of $G$. Thus trees have treewidth 1, and indeed they are exactly the connected graphs with treewidth 1. Cycles have treewidth 2; and the graphs with treewidth at most 2 are exactly the series-parallel graphs.

The following result is our key tool for lower bounding the modularity of graphs which have small degrees, and which have small treewidth or can can be made so by deleting a few edges.


\begin{theorem}\label{thm.tw}
	Let $G$ be a graph with $m\geq 1$ edges and maximum degree $\Delta=\Delta(G)$, and let $E'$ be a subset of the edges such that $\tw(G\backslash E')\leq t$.	Then the modularity $\q(G)$ satisfies
	$$ \q(G) \geq 1- 2((t+1)\Delta/m)^{1/2}-|E'|/m.$$
\end{theorem}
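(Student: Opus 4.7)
The plan is to construct a vertex partition $\cA$ of $G$ from a tree decomposition of $H := G \setminus E'$ and directly estimate $q_\cA(G)$. Write $M := (t+1)\Delta$ and assume $m \geq M$, since otherwise the claimed bound is non-positive and trivial. First, fix a tree decomposition $(T, (X_i)_{i \in I})$ of $H$ of width at most $t$ (so $|X_i| \leq t+1$), root $T$ arbitrarily, and for each vertex $v$ of $G$ let $r(v)$ denote the topmost bag of $T$ containing $v$. Weight each tree node $i$ by $w(i) := \sum_{v:\, r(v) = i} \deg_G(v)$; then $\sum_i w(i) = 2m$ and $w(i) \leq M$. For a parameter $W \geq M$ to be chosen, I would run a greedy post-order tree partition that cuts off a subtree once its accumulated weight first reaches a threshold near $W$, producing connected subtrees $T_1, \ldots, T_{k'}$ of $T$ with each $w(T_j) \leq W$. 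The candidate partition is $\cA = \{V_j\}_{j=1}^{k'}$ where $V_j = \{v : r(v) \in T_j\}$.

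The heart of the argument is to bound the two terms of $q_\cA(G)$. For the edge contribution, a $G$-edge not inside a single part is either one of the at most $|E'|$ deleted edges or an $H$-edge across the partition. For a crossing $H$-edge $uv$ lying in some bag $X_\ell$, the tree-decomposition property forces at least one endpoint --- say $v$ --- to have its bag-subtree $T_v$ straddle a cut tree edge $(i, i')$ of $T$, so $v$ belongs to the separator $X_i \cap X_{i'}$ of size at most $t+1$. Charging each crossing $H$-edge to one such separator vertex (by a fixed rule) and using $\deg_H(v) \leq \Delta$, the number of crossing $H$-edges is at most $(k'-1)(t+1)\Delta$, so $q_\cA^E(G) \geq 1 - |E'|/m - (k'-1)M/m$. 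For the degree tax, $\ds(V_j) = w(T_j) \leq W$ and $\sum_j \ds(V_j) = 2m$ give
\[
q_\cA^D(G) \;=\; \frac{1}{4m^2}\sum_j \ds(V_j)^2 \;\leq\; \frac{W}{2m}.
\]
Combining and optimising the loss $(k'-1)M/m + W/(2m)$ in $W$ at the balance point $W \approx 2\sqrt{mM}$ gives loss $\approx 2\sqrt{M/m} = 2\sqrt{(t+1)\Delta/m}$, yielding the stated bound.

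\textbf{Main obstacle.} The crucial technical ingredient is a sharp tree-partition lemma: for $W \geq M$ one needs the number of pieces to satisfy essentially $k' - 1 \leq 2m/W$. The textbook greedy that only guarantees subtree weights in $[W/2, W]$ yields the weaker $k'-1 \leq 4m/W$ and propagates to a coefficient $2\sqrt{2}$ rather than $2$ in the final bound. The improvement requires a more balanced cut rule producing subtree weights in a narrow band close to $W$, permissible because $M \ll W$ at the optimum, together with careful book-keeping that absorbs the remaining lower-order $O(M/m)$ error into the stated bound under the standing assumption $m \geq M$. A secondary care is the charging rule for crossing $H$-edges: it must be made deterministic (for example, always prefer the endpoint whose bag-subtree in $T$ is farther from the bag containing the edge) so that each crossing edge is counted once and no factor-$2$ overcount occurs.
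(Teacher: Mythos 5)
Your overall plan --- take a tree decomposition of $H = G\setminus E'$, cut it into pieces of bounded volume, and balance edge-contribution loss against degree tax --- matches the paper's strategy, and your degree-tax bound and the optimisation at $W \approx 2\sqrt{(t+1)\Delta m}$ are exactly right. But the step you flag as the ``main obstacle'' is a genuine gap, not a book-keeping nuisance. The sharp tree-partition lemma you need --- a partition of $T$ into connected subtrees $T_1,\dots,T_{k'}$ each of weight at most $W$ with $k'-1 \leq 2m/W$ --- is false, and no refined cut rule can rescue it so long as you only cut tree edges. Take a star-shaped tree decomposition whose centre node has weight $0$ and whose $k$ leaves each have weight $M$ (so $2m = kM$). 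Any partition into connected subtrees of weight at most $W$ can place the centre with at most $\lfloor W/M \rfloor$ leaves, and every remaining leaf is then forced to be a singleton piece; hence $k'-1 \geq k - \lfloor W/M \rfloor$, which exceeds $2m/W = kM/W$ once $k > W^2/(M(W-M))$, and this happens even when $M \ll W$. The obstruction is structural: a single bag can separate $T$ into many small components, and cutting only at tree edges makes each such component cost a separate cut.

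The paper's Lemma~\ref{lem.twcutdir} makes the one change you are missing: it cuts at a \emph{bag} rather than at a tree edge. Root $T$, orient each tree edge $e$ towards the root iff the non-root component has $w(V_e)<s$, and pick a node $i$ of out-degree zero; by Fact~\ref{fact1}, deleting the $\leq (t+1)\Delta$ edges of $H$ incident to $X_i$ disconnects $H$ so that the side of $i$ away from the root --- a chunk of volume $\geq s$ --- splits into parts $U_1,\dots,U_k$, each of volume $<s$, while the remainder $U_0$ has volume $\leq 2m-s$. The accounting that saves the factor $\sqrt2$ is that one application costs a single bag's worth of deleted edges, at most $(t+1)\Delta$, no matter how many small pieces the bag separates off, yet each application strictly reduces the surviving volume by at least $s$; so the number of deleted bags is at most $2m/s$. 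In your star example one cut at the centre bag detaches all $k$ leaves at once for the price of one bag. From there your degree-tax estimate $q_\cA^D(G)\leq s/(2m)$ (from $\sum_i x_i^2 < 2ms$) and the optimisation of $\tfrac{2(t+1)\Delta}{s} + \tfrac{s}{2m}$ at $s = 2\sqrt{(t+1)\Delta m}$ go through exactly as you planned and give coefficient $2$ rather than $2\sqrt2$.
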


Proposition~\ref{cor.allrreg} on $q_r^-(n)$ is a corollary which we shall deduce quickly from Theorem~\ref{thm.tw}.
Our second corollary of Theorem~\ref{thm.tw} is immediate.
\begin{cor}\label{cor.tw}
	For $m=1,2, \ldots$ let $G_m$ be a graph with $m$ edges.
	If $\tw(G_m) \cdot \Delta(G_m) = o(m)$ then
	$\q(G_m) \to 1$ as $m \to \infty$.
\end{cor}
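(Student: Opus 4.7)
The plan is to apply Theorem~\ref{thm.tw} directly with the trivial choice $E' = \emptyset$, so that the hypothesis $\tw(G \setminus E') \leq t$ is satisfied with $t = \tw(G_m)$. This gives
\[
\q(G_m) \;\geq\; 1 - 2\bigl((\tw(G_m)+1)\,\Delta(G_m)/m\bigr)^{1/2}
\]
for every $m\geq 1$, and we need only check that the error term tends to $0$.

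Since $m \geq 1$ the graph $G_m$ has at least one edge, so $\tw(G_m) \geq 1$, and hence $\tw(G_m)+1 \leq 2\,\tw(G_m)$. Thus
\[
(\tw(G_m)+1)\,\Delta(G_m) \;\leq\; 2\,\tw(G_m)\,\Delta(G_m) \;=\; o(m)
\]
by hypothesis, so the square-root term is $o(1)$. Combining with the trivial upper bound $\q(G_m) \leq 1$ yields $\q(G_m) \to 1$, as required.

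There is no real obstacle here: the corollary is simply the $E' = \emptyset$ specialisation of Theorem~\ref{thm.tw}, with the only minor observation being that the `$+1$' in $(t+1)$ is absorbed into $\tw(G_m)$ because $\tw(G_m)\geq 1$ whenever $G_m$ has an edge.
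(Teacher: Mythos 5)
Your proof is correct and is precisely the intended (and immediate) specialisation of Theorem~\ref{thm.tw} with $E'=\emptyset$; the paper simply states the corollary is immediate, and your observation that $\tw(G_m)\geq 1$ (so $\tw(G_m)+1\leq 2\,\tw(G_m)$) tidies up the only cosmetic wrinkle.
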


This result is best possible, in that we cannot replace $o(m)$ by $O(m)$: here are two examples.

(a) If $G$ is the star $K_{1,m}$ (with treewidth 1 and maximum degree m) then
$\tw(G) \cdot \Delta(G) = 1 \cdot m = m$ and $\q(G)=0$~\cite{modgraphclasses}.

(b) For the random cubic graph $G = G_{n,3}$ on $n$ vertices (with $m=3n/2$) we have $\tw(G) \cdot \Delta(G) = 3 \, \tw(G) = O(m)$. However, by Theorem~\ref{thm.kwus}, $\q(G_{n,3}) \leq 0.804$ whp. 

Corollary~\ref{cor.tw} shows that a random planar graph $G_n$ with $n$ vertices whp has modularity near 1. For $\tw(G_n)=O(\sqrt{n})$ by~\cite{norin, gilbert1984separator}, whp $\Delta(G_n)=O(\log n)$~\cite{maxDegSurface}, and whp $m=\Theta(n)$; and so whp $\q(G_n) \geq 1 - O((\log n)^{\frac12}/n^{\frac14}) \; = 1-o(1).$  The same also holds for random graphs on any fixed surface.
\medskip

\noindent
\emph{Plan of the paper}

In the next section we briefly discuss our simulations. In the following section, we prove Propositions~\ref{prop.minq},~\ref{prop.2reg} and~\ref{prop.2regplus} which concern 2-regular graphs. After that, in Section~\ref{sec.tw-proofs}, we prove Theorem~\ref{thm.tw} on treelike graphs and show that this implies Proposition~\ref{cor.allrreg}. Theorem~\ref{thm.rlarge-min} on minimum modularity $q_r^-(n)$, and Theorems~\ref{thm.kwus} and~\ref{thm.rlarge-random} on random modularity $\q(\Gr)$ are all proven in Section~\ref{sec.minrand-proofs}. Then in Section~\ref{sec.maxmod-proofs} we prove our results on maximum modularity. Finally, in Section~\ref{sec.concl} we make some concluding remarks.


\section{Simulations}
\label{sec.sim}

For each $r =3,\ldots,12$ we generated ten instances of a random
$r$-regular graph on $10000$ nodes. The graphs were generated
using a variant of the configuration model which
was shown to converge to the uniform distribution in~\cite{randQuick}.
Modularity was optimised using two different methods, with  Table~\ref{tab:rreg} recording the averages for each method. 

Both methods start with each node in its own part (community).
The Louvain method~\cite{louvain} as implemented in~\cite{jeubcode} considers the nodes in turn, and
reshuffles a node into a different part if that increases the modularity (choosing a part which leads to the greatest increase).
It then forms a weighted reduced graph with a node for each part, and the process is repeated on the reduced graph.
The other method, `Reshuffle', follows Algorithm~1 of~\cite{martelot}. It has the same first phase. The second phase considers each part and merges it with a different part if that increases
the modularity {(again, choosing the part which leads to the greatest increase)}.   It then returns to the node shuffling phase, with the same nodes (we do not form a reduced graph, which would freeze earlier decisions).
There are no guarantees on the performance of these  modularity optimising heuristics.

In Figure~\ref{fig.qsim} we mark with an `X' the average value of the larger (better) of the output values of the two algorithms (which, with figures rounded to 3 decimal places as here, is the same as the larger of the averages), together with the
theoretical interval for the modularity given in Theorem~\ref{thm.kwus}.


\section{Proofs for 2-regular graphs}
\label{sec.2reg}

We first prove Proposition~\ref{prop.2reg} concerning the modularity $\q(G_{n,2})$ of a random 2-regular graph, using two preliminary lemmas; and then give the longer proof of Proposition~\ref{prop.minq}, which concerns $q_2^-(n)$ and the least modular 2-regular graphs. 
Finally we prove Proposition~\ref{prop.2regplus} on $q_2^+(n)$: it turns out to be easier to prove this result directly than to deduce it from Proposition~\ref{prop.maxqr} (because of the `$n$ sufficiently large' qualification in the latter result).
%
\begin{lemma} \label{lem.smalldiff}
Let $G=(V,E)$ and $G'=(V,E')$ be two graphs each with vertex set $V$ and $m$ edges, and with the same vertex degrees.  Then
\[ | \q(G)-\q(G')| \leq |E \Delta E'|/(2m).\] 
\end{lemma}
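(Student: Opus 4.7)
My plan rests on the fact that under the hypotheses the degree-tax $q_\cA^D$ depends only on the degree sequence and the number of edges, not on which edges are actually present. So for any partition $\cA$ of $V$, we have $q_\cA^D(G) = q_\cA^D(G')$, and therefore
\[ q_\cA(G) - q_\cA(G') \;=\; q_\cA^E(G) - q_\cA^E(G') \;=\; \tfrac1m \sum_{A \in \cA} \bigl( e_G(A) - e_{G'}(A) \bigr). \]
Thus the whole task reduces to bounding the difference of edge-contributions for a fixed partition.

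For the edge-contribution step I would argue as follows. Edges in $E \cap E'$ contribute identically to $\sum_A e_G(A)$ and $\sum_A e_{G'}(A)$, so these cancel. Writing $F = E \setminus E'$ and $F' = E' \setminus E$, the difference $\sum_A e_G(A) - \sum_A e_{G'}(A)$ equals the number of $F$-edges lying inside some part of $\cA$ minus the number of such $F'$-edges. In absolute value this is at most $\max(|F|,|F'|)$. Now the key observation: since $|E|=|E'|=m$, we have $|F|=|F'|=|E \Delta E'|/2$, so the absolute difference is at most $|E \Delta E'|/2$. Dividing by $m$ yields $|q_\cA(G) - q_\cA(G')| \leq |E \Delta E'|/(2m)$ for every partition $\cA$.

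To finish, I pass from partitions to the maximum by the standard two-sided argument: if $\cA^*$ achieves $q^*(G)$, then
\[ q^*(G) \;=\; q_{\cA^*}(G) \;\leq\; q_{\cA^*}(G') + \tfrac{|E \Delta E'|}{2m} \;\leq\; q^*(G') + \tfrac{|E \Delta E'|}{2m}, \]
and the reverse inequality follows by swapping the roles of $G$ and $G'$.

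No step here is a real obstacle; the only subtle point (and the one I would make explicit to justify the factor of $2$) is that the common edge count $|E|=|E'|$ forces $|E\setminus E'|=|E'\setminus E|=|E\Delta E'|/2$. Without the equal-edge-count hypothesis one would only get the weaker bound $|E\Delta E'|/m$.
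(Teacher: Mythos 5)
Your proof is correct and follows essentially the same route as the paper's: you observe that the degree-tax is invariant under changing $E$ to $E'$, bound the difference of edge-contributions for a fixed partition by $|E\setminus E'|/m = |E\Delta E'|/(2m)$ (using $|E|=|E'|$ to equate $|E\setminus E'|$ with $|E\Delta E'|/2$), and pass to the maximum over partitions by the standard two-sided argument. The only difference is that you spell out the $\max(|F|,|F'|)$ step and the two-sided maximisation more explicitly than the paper does.
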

\begin{proof}
Let $\cA$ be a partition of $V$.  Then $q_{\cA}^E(G) - q_{\cA}^E(G') \leq |E \setminus E'|/m = |E \Delta E'|/(2m)$ and $q_{\cA}^D(G) = q_{\cA}^D(G')$, so $q_{\cA}(G) - q_{\cA}(G') \leq |E \Delta E'|/(2m)$.  Hence $q_{\cA}(G) - \q(G') \leq |E \Delta E'|/(2m)$.  But this holds for each partition $\cA$, so $\q(G) - \q(G') \leq |E \Delta E'|/(2m)$; and the lemma follows.
\end{proof}

\begin{lemma}\label{lem.fewcycles}
For $n$ sufficiently large, the expected number of cycles in a random 2-regular $n$-vertex graph is at most $\log n$.
\end{lemma}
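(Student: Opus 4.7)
The plan is to use linearity of expectation to reduce the expected number of cycles in $G_{n,2}$ to a sum indexed by cycle length, and then bound that sum using counting asymptotics for 2-regular simple labelled graphs. Let $a_m$ denote the number of such graphs on $[m]$, with the conventions $a_0:=1$ and $a_1=a_2:=0$, and let $X_\ell$ count the number of $\ell$-cycles in $G_{n,2}$. A double counting argument (choose the $\ell$-vertex subset, arrange it into an undirected cycle in $(\ell-1)!/2$ ways, then complete with any 2-regular graph on the remaining vertices) gives
\[ E[X_\ell] \;=\; \binom{n}{\ell}\cdot\tfrac{(\ell-1)!}{2}\cdot\tfrac{a_{n-\ell}}{a_n} \;=\; \tfrac{n!}{2\ell\,(n-\ell)!}\cdot\tfrac{a_{n-\ell}}{a_n}, \]
valid for $\ell=n$ and for $3\leq \ell\leq n-3$, with $X_\ell\equiv 0$ otherwise.

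Next I would establish the asymptotic $a_m \sim e^{-3/4}m!/\sqrt{\pi m}$. Applying the exponential formula to the class of undirected cycles of length at least $3$ (each such cycle on $k$ labelled vertices contributing $(k-1)!/2$ structures) gives the EGF
\[ \sum_{m\geq 0}\tfrac{a_m}{m!}\,x^m \;=\; \exp\!\Big(\sum_{k\geq 3}\tfrac{x^k}{2k}\Big) \;=\; \frac{e^{-x/2-x^2/4}}{\sqrt{1-x}}, \]
which has a unique singularity on $|x|\leq 1$ at $x=1$, of square-root type. Standard singularity analysis delivers the stated asymptotic and, in particular, absolute constants $c_1,c_2>0$ with $c_1\leq a_m\sqrt{m}/m!\leq c_2$ for every $m\geq 3$. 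Substituting these bounds, each summand $E[X_\ell]$ is at most a constant multiple of $\tfrac{1}{\ell}\sqrt{n/(n-\ell)}$. I would then split the sum at $\ell=n/2$: for $\ell\leq n/2$ the square-root factor is bounded and the term is $O(1/\ell)$, contributing $O(\log n)$; the substitution $k=n-\ell$ turns the range $\ell>n/2$ into a sum of order $\sum_{k\geq 3}1/\sqrt{nk}=O(1)$; and the $\ell=n$ term is $O(1/\sqrt{n})$. Tracking leading constants yields $E[c(G_{n,2})] = (\tfrac12+o(1))\log n$, which lies below $\log n$ for all sufficiently large $n$.

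The main obstacle is the \emph{uniform} asymptotic for $a_m$: the ratio $a_{n-\ell}/a_n$ must be controlled even when $\ell$ is close to $n$, so a pointwise asymptotic at each fixed $m$ is not enough. Singularity analysis (via a transfer theorem) handles this cleanly, but an elementary alternative is to bootstrap the bounds $a_m\asymp m!/\sqrt{m}$ from the convolution recursion $a_n=\sum_{\ell}\binom{n-1}{\ell-1}\tfrac{(\ell-1)!}{2}\,a_{n-\ell}$, which succeeds once the right $m!/\sqrt{m}$ Ansatz is set up inductively.
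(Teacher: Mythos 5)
Your proof is correct, but it takes a genuinely different route from the paper. The paper works in the configuration model: it lets $M_n$ be the random $2$-regular multigraph, writes down the exact expectation of the number of $k$-cycles as a telescoping ratio of double factorials, bounds the sum $\sum_{k\geq 3}$ by $(\tfrac14+o(1))\log n$, and then transfers to the simple random graph $G_{n,2}$ by dividing by $P(M_n\ \text{simple})\to e^{-3/4}$, giving the upper bound $(e^{3/4}/4+o(1))\log n<\log n$. You instead work directly with labelled simple $2$-regular graphs: you express $E[X_\ell]$ as $\binom{n}{\ell}\tfrac{(\ell-1)!}{2}\tfrac{a_{n-\ell}}{a_n}$, derive the EGF $(1-x)^{-1/2}e^{-x/2-x^2/4}$ for $a_m$, extract the uniform two-sided estimate $a_m\asymp m!/\sqrt{m}$ by singularity analysis (or the bootstrap you sketch), and sum $\tfrac{1}{2\ell}\sqrt{n/(n-\ell)}$ by splitting at $\ell=n/2$. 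Your route needs the extra machinery of uniform transfer bounds for the EGF coefficients (which you correctly flag as the delicate point), but buys the sharp asymptotic $(\tfrac12+o(1))\log n$; the paper's configuration-model computation is more elementary and self-contained but only yields a cruder (still sufficient) constant. One small bookkeeping point to make explicit in a write-up: you need $a_m>0$ for all $m\geq 3$ (true, since the $m$-cycle exists) so the finitely many initial terms can be absorbed into the constants $c_1,c_2$, and you need to separate the index $\ell=n$ (where $a_0=1$) from the rest of the range $\ell\leq n-3$, which you do.
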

\begin{proof}
We use the configuration model, see for example~\cite{jbook}.
Let $f(n)$ be the number of perfect matchings on a set of $n$ vertices.  Then $f(2n) = (2n-1)!! = (2n-1)(2n-3) \cdots 3 \cdot 1$.
  Let $M_n$ be a random 2-regular $n$-vertex multigraph.  For each integer $k$ with $3 \leq k \leq n$, the expected number $g(k,n)$ of $k$-cycles in $M_n$ equals
\begin{eqnarray*} 
\binom{n}{k} \frac{(k-1)!}{2} \, 2^{k-1}    \frac{f(n-2k)}{f(2n)}
  & = &
\frac{1}{4k} \, \frac{(n)_k 2^k}{\prod_{i=0}^{k-1} (2n-(2i+1))} \\
  & = &
\frac{1}{4k} \, \frac{\prod_{i=0}^{k-1} (2n-2i)}{\prod_{i=0}^{k-1} (2n-(2i+1))}.
\end{eqnarray*}
Hence, by comparing factors, 
$g(k,n) \leq \frac1{4k} \frac{2n}{2n-(2k-1)}$ (and $g(k,n) \geq \frac1{4k} $).  Since $\sum_{k >n/t} g(k,n) < t$, taking $t=\sqrt{\log n}$ say, we see that $\sum_{k=3}^{n} g(k,n) \leq (1/4 +o(1)) \log n$.
But the probability that $M_n$ is simple tends to $e^{-3/4}$, see for example Corollary 9.7 of~\cite{jbook}.  Hence the expected number of cycles in $G_2$ is at most $(e^{3/4}/4 +o(1)) \log n$, and the lemma follows.
\end{proof}
\smallskip

\begin{proof}[Proof of Proposition~\ref{prop.2reg}]
Let $\omega(n) \to \infty$ as $n \to \infty$.  By the last lemma and Markov's inequality, whp $G_2$ has at most $\omega(n) \log n$ cycles, and so there is a copy of $C_n$ such that the symmetric difference of the edge sets has size at most $4 \omega(n) \log n$. Thus by Lemma~\ref{lem.smalldiff}, whp $|\q(G_2)-\q(C_n)| \leq 2 \omega(n) \log n /n$.  Hence, by the result on $\q(C_n)$, whp $\q(G_2) = 1-2/\sqrt{n} + O(\omega(n) \log n /n)$.  Choosing $\omega(n)=o(\log n)$ completes the proof.
\end{proof}
\smallskip

\begin{proof}[Proof of Proposition~\ref{prop.minq}]
We may and shall restrict our attention to partitions into connected parts (as noted earlier). Suppose we are given a (large) integer $n$.
The $n$-\emph{cost} $f(t,{\cA})$ of a partition $\cA$ of $C_t$ into $k>1$ parts with $t_1,\ldots,t_k$ vertices is
\[ f(t,{\cA})= \tfrac{k}{n} + \tfrac{\sum_i t_i^2}{n^2}.\]
Here the $n$-cost refers to the contribution to $1-\q(G)$. For a given $k$, this cost is minimised when the $t_i$ are balanced (that is, differ by at most 1), so there is essentially just one partition to consider.

Let $F_k(t)$ be the $n$-cost of a balanced $k$-partition of $C_t$. 
Of course $F_1(t)= \tfrac{t^2}{n^2}$. 
Write $t$ as $ak+b$ with $0 \leq b \leq a-1$ (where $a=\lfloor t/k\rfloor$).  Then $t=(k-b)a +b(a+1)$, so for $2 \leq k \leq t$
\[ F_k(t)= \tfrac{k}{n} + \tfrac{(k-b)a^2 + b(a+1)^2}{n^2}. \]
Let $f_k(t)$ be defined for real $t$ with $0<t \leq n$, and be the natural approximation to $F_k(t)$, namely
 $f_1(t)= \tfrac{t^2}{n^2}$, and $f_k(t)= \tfrac{k}{n} + \tfrac{t^2}{kn^2}$ for $k \geq 2$.  Then $F_k(t) \geq f_k(t)$ by convexity.  Also
\begin{eqnarray}
\notag F_k(t)-f_k(t) & = &
\tfrac1{n^2} ((k-b)a^2+b(a+1)^2 - \tfrac{t^2}{k}\\
\label{eq.approxcost} & = &
\tfrac{b(k-b)}{ k n^2}  \;\; \leq \tfrac{k}{4n^2}.
\end{eqnarray}

The approximate `unit $n$-cost' $g_k(t)=f_k(t)/t$ is given by $g_1(t)= \tfrac{t}{n^2}$, and $g_k(t)= \tfrac{k}{nt} + \tfrac{t}{kn^2}$ for $k \geq 2$. Let $F_*(t)$ be the minimum over $k$ of $F_k(t)$, let $f_*(t)$ be the minimum over $k$ of $f_k(t)$, and let $g_*(t)$ be the minimum over $k$ of $g_k(t)$ (so $f_*(t)=g_*(t)\, t$). Let
$\gamma = \sqrt{2/3}+ \sqrt{3/2}=5/\sqrt{6}$. 
 
We shall establish five claims A,\ldots,E (with Claim C being used only to prove Claim D).
\smallskip

\noindent
\emph{Claim A.}
For all $0<t \leq n$ we have
\begin{equation} \label{eqn.h}
g_*(t) \leq g_*(\sqrt{6n})= \gamma n^{-3/2}.
\end{equation}%
\begin{proof}[Proof of Claim A] 
Consider the minima of the functions $g_k(t)$ for $k=2,3,\ldots$; and the crossings of the graphs of the functions $g_k(t)$ for $k=1,2,\ldots$.  We restrict attention to $t>0$. For $k \geq 2$, $g_k(t)$ is strictly convex and has minimum value $2n^{-3/2}$, achieved at $t=k \sqrt{n}$. The graphs of $g_1(t)$ and $g_2(t)$ meet when $t=2\sqrt{n}$ with common value $2n^{-3/2}$. For $k \geq 2$, the graphs of $g_k(t)$ and $g_{k+1}(t)$ meet at $t=\sqrt{k(k+1)n}$, with common value $\gamma_{k} n^{-3/2}$, where $\gamma_{k}= \sqrt{\tfrac{k}{k+1}}+\sqrt{\tfrac{k+1}{k}}$. Observe that $\max_{k\geq 2} \gamma_k=\gamma_2=\gamma$. 
Further, the curves $g_k(t)$ do not meet anywhere else (for $t>0$).
Hence
\[ g_*(t) \leq g_*(\sqrt{6n}) =g_2(\sqrt{6n})=g_3(\sqrt{6n})=\gamma n^{-3/2},\]
as required. 
\end{proof}
\smallskip

\noindent
\emph{Claim B.}
Let $\eps>0$.  Then there is a constant $c_0$ such that if $c_0 \sqrt{n} \leq t \leq n$ then (a) $F_*(t) \leq (1+\eps) 2n^{-3/2} t$, and (b) a balanced collection of about $t/\sqrt{6n}$ cycles of with combined number of vertices $t$ gives a total $n$-cost $ \geq (1-\eps) (5/\sqrt{6}) n^{-3/2} t$. 
\smallskip

\begin{proof}[Proof of Claim B]
(a).  Let $k= \lceil \tfrac{t}{\sqrt{n}}\rceil$.  Then the corresponding $n$-cost is at most
\[ \tfrac{k}{n} + \tfrac{k(\sqrt{n})^2}{n^2} = \tfrac{2k}{n} < \tfrac{2t}{n^{3/2}} + \tfrac{2}{n}= \tfrac{2t}{n^{3/2} } \, (1+ \tfrac{\sqrt{n}}{t}) \leq \tfrac{2t}{n^{3/2} } (1+ \eps) \, \]
if $t \geq (1/\eps) \sqrt{n}$.

(b).  There exists $\eta>0$ such that if $(1-\eta)\sqrt{6n} \leq t_i \leq (1+\eta)\sqrt{6n}$ then $g_*(t_i) \geq (1-\eps) \tfrac{5}{\sqrt{6}} n^{-3/2}$, and so $f_*(t_i) \geq (1-\eps) \tfrac{5}{\sqrt{6}} n^{-3/2}t_i$.  If $c_0$ is sufficiently large then each cycle in the balanced collection will have size $t_i$ in this range.  Hence the total $n$-cost will satisfy \[ \sum_i f_*(t_i) \geq (1-\eps) \tfrac{5}{\sqrt{6}} n^{-3/2} \sum_i t_i  =(1-\eps) \tfrac{5}{\sqrt{6}} n^{-3/2}t, \]
as required.
\end{proof}
\smallskip

\noindent
\emph{Claim C.}
If $t$ is an integer
$\leq 2 \sqrt{n}$, 
then $F_*(t)=F_1(t)$.
\smallskip

\begin{proof}[Proof of Claim C]
For $t \geq 3$ and $2 \leq k \leq t$,
\[ F_k(t)-F_1(t) \geq 
\tfrac{k}{n} +\tfrac{k(2t/k)^2}{4n^2} - \tfrac{t^2}{n^2} = \tfrac{k}{n} - \tfrac{t^2}{n^2} (1-\tfrac1{k}).\]
Thus
$ F_2(t)-F_1(t) \geq \tfrac{2}{n} - \tfrac{t^2}{2n^2} \geq 0$,
$ F_3(t)-F_1(t) \geq  \tfrac{3}{n} - \tfrac{2t^2}{3n^2} > 0$, and
for $k \geq 4$
\[ F_k(t)-F_1(t) > \tfrac{4}{n} -
\tfrac{t^2}{n^2} \geq 0. \]
This completes the proof.
\end{proof}
\smallskip

\noindent
\emph{Claim D.}
In an $n$-vertex 2-regular graph $G$ minimising $\q(G)$, at most one component $C_t$ has size $\leq \sqrt{n}$.
\smallskip

\begin{proof}[Proof of Claim D]
Suppose $G$ has two components $C_{t_1}$, $C_{t_2}$ with $t_1,t_2 \leq \sqrt{n}$.
Replace these two components by one component $C_{t_1+t_2}$.  By Claim C, $F_*(t_1+t_2)= F_1(t_1+t_2)$: hence the increase in $n$-cost is at least \[ F_1(t_1+t_2) - F_1(t_1)-F_1(t_2)
= \tfrac{t_1t_2}{2n^2} >0,\]
which completes the proof.
\end{proof}
\smallskip

\noindent
\emph{Claim E.}
Let $c_0 \geq \sqrt{2}$, and let $t \leq c_0 \sqrt{n}$.  Let $\cal A$ be a partition of $C_t$ minimising the $n$-cost.  Then $\cal A$ has $k \leq k_0= 1 + \sqrt{2} c_0$ parts.
\smallskip

\begin{proof}[Proof of Claim E]
Since $k_0 \geq 3$, we may assume that $k \geq 3$.
Suppose that $\cal A$ has two parts of sizes $t_1, t_2 < \sqrt{n/2}$.  Replace these two parts by a single part of size $t_1+t_2$ (where each part corresponds to a path).  The $n$-cost decreases by
\[\tfrac1{n} - \tfrac{(2(t_1+t_2))^2 - (2t_1)^2-(2t_2)^2}{4n^2}= 
\tfrac1{n} -\tfrac{2t_1t_2}{n^2} >0\]
since $t_1t_2 <n/2$, a contradiction.
Hence $\cal A$ has at most one part of size $< \sqrt{n/2}$.
It follows that the number of parts is less than $1+\tfrac{t}{\sqrt{n/2}}
 \leq k_0$.
\end{proof}

We can now use Claims B, D, E and A to prove the upper bound on $1-q_*(n)$.
Let $G=G_n$ minimise $\q(G)$ over $n$-vertex graphs.
By Claim B, with $\eps>0$ sufficiently small that $(1+\eps) \, 2 < (1-\eps) \, 5/\sqrt{6}$, 
each component of $G$ has size at most $c_0 \sqrt{n}$. 
By Claim D, $G$ has $s \leq 1+\sqrt{n}$ components.  Thus $G$ has components $C_{t_1},\ldots,C_{t_s}$ where $t_1+\ldots t_s=n$ and each $t_i \leq c_0 \sqrt{n}$.
Also, by Claim E, for each component~$C_t$ we need only to consider partitions with at most a constant $k_0$ parts.
Then by~(\ref{eq.approxcost}) and Claim~A
\begin{eqnarray*} 1-\q(G)
& = &
\sum_i F_*(t_i) \; \leq \; \sum_i f_*(t_i) \, + s \tfrac{k_0}{4n^2} \\
& = & 
\sum_i g_*(t_i) t_i + s \tfrac{k_0}{4n^2} \; \leq  \gamma n^{-3/2} \sum_i t_i + \tfrac{sk_0}{4n^2} \\
& = &
\gamma /\sqrt{n} + \tfrac{s k_0}{4n^2} = 
\gamma / \sqrt{n}  + O(n^{-3/2}).
\end{eqnarray*}

We have now seen that $1-q_*^{(2)}(n) \leq \gamma / \sqrt{n}  + O(n^{-3/2})$. To show the reverse inequality, consider a graph $G$ formed from $\lceil \sqrt{n/6}\rceil$ components, where each component is $C_{t_i}$ with $t_i = \sqrt{6n} + O(1)$.   
For $x=O(1)$, $g'_2(\sqrt{6n}+x) \sim \tfrac1{6n^2}$ and $g'_3(\sqrt{6n}+x) \sim -\tfrac1{6n^2}$.
 Thus for $k=2,3$ we have $g_k(t_i)= g_k(\sqrt{6n}) + O(n^{-2}) = \gamma n^{-3/2} + O(n^{-2})$; and so $g_*(t_i) = \gamma n^{-3/2} + O(n^{-2})$.  Hence the total $n$-cost  is 
\[ \sum_i g_*(t_i) t_i = (\gamma n^{-3/2} + O(n^{-2})) \sum_i t_i=
\gamma n^{-1/2} + O(n^{-1}),\] 
as required. 
This completes the proof of Proposition~\ref{prop.minq}.
\end{proof}

\begin{proof}[Proof of Proposition~\ref{prop.2regplus}]
The result is easy to check when $n=3,4,5$. Let $n \geq 6$ and let $G$ be a 2-regular $n$-vertex graph such that $\q(G)=q_2^+(n)$.  Clearly each component cycle of $G$ has order at most 5, since we could split a larger cycle to obtain a 2-regular $G'$ with $\q(G')>\q(G)$.  Further there cannot be a component $C_5$.  For, if there is another component $C_5$ we could replace two $C_5$'s by one $C_4$ and two $C_3$'s; if there is a component $C_4$ we could replace $C_5$ and $C_4$ by three $C_3$'s, and if there is a component $C_3$ we could replace $C_5$ and $C_3$ by two $C_4$'s: in each case we would strictly increase the modularity.  Thus the only possible components in $G$ are $C_3$ and $C_4$.  Further there can be at most two $C_4$'s, as we could replace three $C_4$'s by four $C_3$'s.  It now follows easily that the optimal configurations are as claimed.  Finally it is now easy to check that we have the correct formulae for $q_2^+(n)$.
\end{proof}


\section{Proofs for treewidth and maximum degree}
\label{sec.tw-proofs}

To prove Theorem~\ref{thm.tw} (the `treewidth lower bound') we need one preliminary lemma. Given a graph and a partition of its vertex set, a \emph{cross-edge} is an edge with its end vertices in different parts of the partition.

\begin{lemma}\label{lem.twcutdir}
  Let the graph $G$ have $m$ edges and maximum degree at most $d$, and let the set $E'\subset E(G)$ be such that the subgraph $H=G\backslash E'$ satisfies $\tw(H) \leq t$. Let $s$ satisfy $d<s \leq 2m-d$. Then by deleting from $H$ the edges incident with at most $t+1$ vertices, thus forming the subgraph $H'$, we can find a partition $V(G) =U_0 \cup \cdots \cup U_{k}$ with $k\geq 2$, no cross-edges in $H'$, and such that $\ds(U_0) \leq 2m-s$ and $\ds(U_j) <s$ for each $j=1,\ldots,k$.  (We allow $U_0=\emptyset$.)
  \end{lemma}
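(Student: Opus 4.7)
The plan is to produce both the separating set and the partition from a single \emph{bag} $X_{i^*}$ of a tree decomposition of $H$, chosen by a descent argument. Fix a tree decomposition $(T,(X_i)_{i\in V(T)})$ of $H$ of width at most $t$, so $|X_i|\le t+1$, and assign weights $w(v):=\deg_G(v)$, so that $w(V(G))=2m$. Root $T$ at an arbitrary node and for each $i\in V(T)$ write $A_i$ for the union of the bags in the subtree rooted at $i$. The root $r$ satisfies $w(A_r)=2m\ge s+d>s$, so let $i^*$ be a \emph{deepest} node with $w(A_{i^*})\ge s$; by the choice of depth every child $c$ of $i^*$ then has $w(A_c)<s$.

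Form $H'$ by deleting from $H$ every edge incident with $X_{i^*}$; this affects at most $t+1$ vertices. The subtrees of $T-i^*$ correspond to the pieces $Y_c:=A_c\setminus X_{i^*}$, one per child $c$, together with (if $i^*$ is not the root) the upward piece $Y_{\mathrm{up}}:=V(G)\setminus A_{i^*}$; these partition $V(G)\setminus X_{i^*}$. The subtree-of-bags property of tree decompositions (condition 3) forces any edge of $H$ lying between two such pieces to be covered by a single bag intersecting both sides of $T-i^*$, which is impossible, so in $H'$ the only potential cross-edges would touch $X_{i^*}$ -- and those have all been removed. I then set $U_0:=Y_{\mathrm{up}}$ (or $U_0:=\emptyset$ if $i^*=r$), let each nonempty $Y_c$ be a separate part, and let each vertex of $X_{i^*}$ be a singleton part. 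The required bounds are immediate: $\ds(U_0)=w(Y_{\mathrm{up}})=2m-w(A_{i^*})\le 2m-s$; each child piece satisfies $\ds(Y_c)\le w(A_c)<s$; and each singleton $\{v\}\subseteq X_{i^*}$ has $\ds(\{v\})=\deg_G(v)\le d<s$.

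The one subtle point -- and the main (minor) obstacle -- is verifying $k\ge 2$ in every case. The parts $U_1,\ldots,U_k$ together cover $A_{i^*}$, whose total weight is at least $s$, yet no individual part has weight $\ge s$; hence at least two of them must be present. This also dispatches the extreme configurations cleanly: if $i^*$ is the root then $U_0=\emptyset$ but the pool of children and singletons of $X_{i^*}$ already supplies $\ge 2$ parts by the same weight argument, and if $i^*$ is a leaf then $w(X_{i^*})=w(A_{i^*})\ge s>d$ forces $|X_{i^*}|\ge 2$, providing the required two singleton parts on its own.
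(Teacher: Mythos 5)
Your proof is correct and follows the same core strategy as the paper's: fix a tree decomposition of $H$, select a node $i^*$ whose subtree carries weight $\geq s$ while all children's subtrees carry weight $<s$, delete the edges of $H$ incident to the bag $X_{i^*}$, and build the partition from the components of $T-i^*$. Your version is actually a touch cleaner in two respects: you avoid the need for a normalised tree decomposition (the paper arranges that adjacent bags differ in exactly one element and that the root is a leaf with a singleton bag), because you simply place each vertex of $X_{i^*}$ in its own singleton part rather than distributing $X_{i^*}$ among the child pieces; and your uniform argument for $k\geq 2$ (total weight $\geq s$ covered by parts each of weight $<s$) handles the leaf and root cases without the paper's separate observations that $i$ can be neither.
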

  
  \begin{figure}

\begin{center}
\scalebox{0.82}{
\begin{tikzpicture}
\node at (1,4.5) {$G=$};
\draw (2.5,4)-- (3,5) --(2,5) -- (2.5,4) -- (3.5,4); 
\draw (3,5) --(4,5)--(2.5,4); 
\draw[dashed] (2,5)-- (3.5,4) --(3,5); 

\node[circle,fill=orange, minimum width=.2cm] (ball) at (2,5) {};
\node[circle,fill=gray, minimum width=.2cm] (ball) at (2.5,4) {};
\node[circle,fill=green, minimum width=.2cm] (ball) at (3,5) {};
\node[circle,fill=red, minimum width=.2cm] (ball) at (3.5,4) {}; 
\node[circle,fill=blue, minimum width=.2cm] (ball) at (4,5) {};

\node at (1,2.5) {$H=$};
\draw (2.5,2)-- (3,3) --(2,3) -- (2.5,2) -- (3.5,2);
\draw (3,3) --(4,3)--(2.5,2); 
\node[circle,fill=orange, minimum width=.2cm] (ball) at (2,3) {};
\node[circle,fill=gray, minimum width=.2cm] (ball) at (2.5,2) {};
\node[circle,fill=green, minimum width=.2cm] (ball) at (3,3) {};
\node[circle,fill=red, minimum width=.2cm] (ball) at (3.5,2) {}; 
\node[circle,fill=blue, minimum width=.2cm] (ball) at (4,3) {};


\begin{scope}[thin]
  \draw (7,4.5)-- (7,3)node[draw=none,fill=none,font=\small,midway,right] {16};
  \draw (7,3)-- (7,1.5)node[draw=none,fill=none,font=\small,midway,right] {16};
  \draw[blue, very thick] (7,1.5)-- (7,0)node[draw=none,fill=none,font=\small,midway,right] {13};

  \draw (8.6,-0.2)-- (7,0)node[draw=none,fill=none,font=\small,midway,above] {10}; 
  \draw (10.2,-0.4)-- (8.6,-0.2)node[draw=none,fill=none,font=\small,midway,above] {6}; 
  \draw (11.8,-0.6)-- (10.2,-0.4)node[draw=none,fill=none,font=\small,midway,above] {2}; 


  \draw (5.4,-0.2)-- (7,0)node[draw=none,fill=none,font=\small,midway,above] {7}; 
  \draw (3.8,-0.4)-- (5.4,-0.2)node[draw=none,fill=none,font=\small,midway,above] {7}; 
  \draw (2.2,-0.6)-- (3.8,-0.4)node[draw=none,fill=none,font=\small,midway,above] {3}; 
\end{scope}

\begin{scope}[thick, every node/.style={sloped,allow upside down}]
  \draw (7,4.5)-- node {\midarrow} (7,3);
  \draw (7,3)-- node {\midarrow} (7,1.5);
  \draw[blue, very thick] (7,1.5)-- node {\midarrow} (7,0);

\draw (8.6,-0.2)-- node {\midarrow} (7,0); 
\draw (10.2,-0.4)-- node {\midarrow} (8.6,-0.2); 
\draw (11.8,-0.6)-- node {\midarrow} (10.2,-0.4); 
  
  \draw (5.4,-0.2)-- node {\midarrow} (7,0); 
  \draw (3.8,-0.4)-- node {\midarrow} (5.4,-0.2); 
  \draw (2.2,-0.6)-- node {\midarrow} (3.8,-0.4); 
\end{scope}

\filldraw[dotted, color=blue, fill=white, fill opacity=0, thick](6.8,-0.92) ellipse (6 and 1.9){};
\node[blue] at (6.8,0.7) {$e$}; 
\node[blue] at (4,-1.8) {$T_e$};

\filldraw[color=red, fill=white, fill opacity=1, very thick](7,4.5) ellipse (0.5 and 0.4){};
\node[circle,fill=orange, minimum width=.2cm] (ball) at (7,4.5) {};
\node[red] at (7.8,4.8) {\small root}; 

\filldraw[color=red, fill=white, fill opacity=1, very thick](7,3) ellipse (0.5 and 0.4){}; 
\node[circle,fill=green, minimum width=.2cm] (ball) at (6.77,3) {};
\node[circle,fill=orange, minimum width=.2cm] (ball) at (7.23,2.95) {};

\filldraw[color=red, fill=white, fill opacity=1, very thick](7,1.5) ellipse (0.5 and 0.4){}; 
\node[circle,fill=gray, minimum width=.2cm] (ball) at (7,1.68) {};
\node[circle,fill=green, minimum width=.2cm] (ball) at (7.3,1.5) {};
\node[circle,fill=orange, minimum width=.2cm] (ball) at (6.75,1.4) {};

\filldraw[color=red, fill=white, very thick](7,0) ellipse (0.5 and 0.4){}; 
\node[circle,fill=gray, minimum width=.2cm] (ball) at (6.77,-0.1) {};
\node[circle,fill=green, minimum width=.2cm] (ball) at (7.23,0.1) {};
\node[blue] at (7,-0.6) {\small $i$};

\filldraw[color=red, fill=white, fill opacity=1, very thick](8.6,-0.2) ellipse (0.5 and 0.4){}; 
\node[circle,fill=green, minimum width=.2cm] (ball) at (8.4,-0.08) {};
\node[circle,fill=blue, minimum width=.2cm] (ball) at (8.8,-0.2) {};
\node[circle,fill=gray, minimum width=.2cm] (ball) at (8.5,-0.4) {};
\node[blue] at (8.5,-0.8) {\small $j_2$};

\filldraw[color=red, fill=white, fill opacity=1, very thick](10.2,-0.4) ellipse (0.5 and 0.4){}; 
\node[circle,fill=green, minimum width=.2cm] (ball) at (10,-0.4) {};
\node[circle,fill=blue, minimum width=.2cm] (ball) at (10.4,-0.5) {};

\filldraw[color=red, fill=white, fill opacity=1, very thick](11.8,-0.6) ellipse (0.5 and 0.4){}; 
\node[circle,fill=blue, minimum width=.2cm] (ball) at (11.8,-0.6) {};

\filldraw[color=red, fill=white, fill opacity=1, very thick](5.4,-0.2) ellipse (0.5 and 0.4){}; 
\node[circle,fill=gray, minimum width=.2cm] (ball) at (5.4,-0.2) {};
\node[blue] at (6,-0.5) {\small $j_1$};

\filldraw[color=red, fill=white, fill opacity=1, very thick](3.8,-0.4) ellipse (0.5 and 0.4){}; 
\node[circle,fill=gray, minimum width=.2cm] (ball) at (4.03,-0.35) {};
\node[circle,fill=red, minimum width=.2cm] (ball) at (3.57,-0.45) {};

\node[black] at (10,4.5) { $V_e=\{$};
\node[black] at (12.15,4.5) { $\}$};
\node[circle,fill=gray, minimum width=.2cm] (ball) at (10.7,4.5) {};
\node[circle,fill=red, minimum width=.2cm] (ball) at (11.1,4.5) {};
\node[circle,fill=green, minimum width=.2cm] (ball) at (11.5,4.5) {};
\node[circle,fill=blue, minimum width=.2cm] (ball) at (11.9,4.5) {};

\node[black] at (10,4) { $U_0=\{$};
\node[black] at (10.95,4) { $\}$};
\node[circle,fill=orange, minimum width=.2cm] (ball) at (10.7,4) {};

\node[black] at (10.6,3.5) { $U_1=V_{i{j_1}}=\{$};
\node[black] at (12.55,3.5) { $\}$};
\node[circle,fill=red, minimum width=.2cm] (ball) at (11.9,3.5) {};
\node[circle,fill=gray, minimum width=.2cm] (ball) at (12.3,3.5) {};

\node[black] at (10.9,3) { $U_2=V_{i{j_2}}\backslash U_1=\{$};
\node[black] at (13.12,3) { $\}$};
\node[circle,fill=green, minimum width=.2cm] (ball) at (12.48,3) {};
\node[circle,fill=blue, minimum width=.2cm] (ball) at (12.88,3) {};

\filldraw[color=red, fill=white, fill opacity=1, very thick](2.2,-0.6) ellipse (0.5 and 0.4){}; 
\node[circle,fill=red, minimum width=.2cm] (ball) at (2.2,-0.6) {};
\end{tikzpicture}
}
\end{center}

\caption[Treewidth illustration.]{\small An illustration of the construction in the proof of Lemma~\ref{lem.twcutdir} applied to a toy graph~$G$ with removed edge set $E'$ (dashed) and $s=12$. Graph $G$ has treewidth 3 but after removing the dashed edges graph $H$ has treewidth 2. A tree-decomposition for $H$ is shown and the leaf node at the top chosen to be the root. For each edge~$h$ in the tree-decomposition the number $w(V_h)$ is shown, and the edge oriented toward the root if $w(V_h)<12$. The rooted tree-decomposition and threshold $s$ define node $i$, edge $e$, component $T_e$, and partition $V=U_0 \cup U_1 \cup U_2$ as shown.}
\label{fig.twcut}
\end{figure}
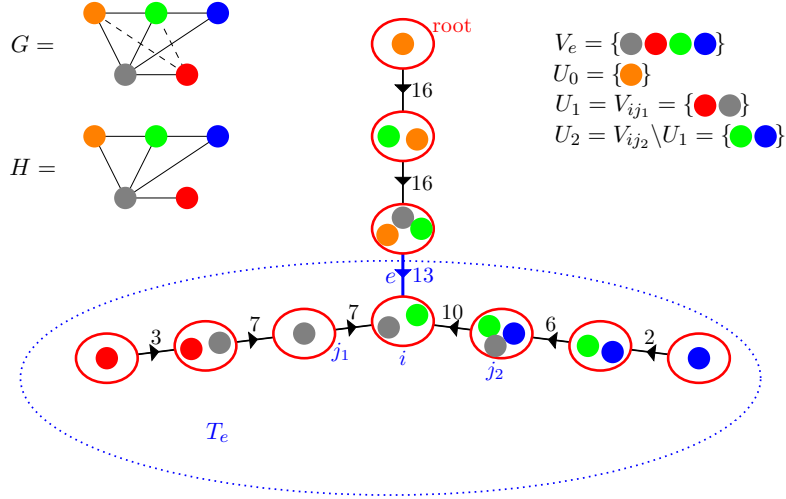
  
\begin{proof} 
The first step is to introduce a weight function which remembers information about the edges in $E'$: for a vertex $v\in H$ let $w(v)=\deg_G(v)$, and for a set $U$ of vertices let $w(U)=\sum_{v\in U}w(v)$.
 The proof will take a tree-decomposition of $H$, choose one bag $X_i$, and delete all edges of $H$ incident to the vertices in $X_i$.
 
We want a suitably well-behaved tree-decomposition, see for example~\cite{karb}. 
 It is well known that (by adding nodes if necessary) we can guarantee a tree decomposition $T$ of width at most $t$ such that if $ij$ is an edge of~$T$ then the symmetric difference $X_i \triangle X_j$ has exactly one element. If distinct nodes $i$ and $j$ have the same bag (that is, if $X_i=X_j$) and have a common neighbour, then we can replace the nodes $i$ and $j$ by a single new node with the same bag: thus we may assume that if nodes $i$ and $j$ have a common neighbour in $T$ then $X_i\neq X_j$.  Finally, again by adding nodes if necessary, we may assume that each leaf~$i$ of~$T$ has bag $X_i$ of size~1. Fix such a tree decomposition, and fix a leaf to be the root vertex. 

Recall that deleting any edge in a tree leaves exactly two connected components. Let us recall also one simple standard fact about tree-decompositions (as introduced in Subsection 1.3).

\noindent
\begin{fact} \label{fact1}
If $e= ij$ is an edge of the tree $T$, $k$ and $k'$ are nodes of $T$ in different components of $T \backslash e$, 
and vertices $v \in X_{k}$ and $v' \in X_{k'}$ are adjacent in $G$, then at least one of $v$, $v'$ is in $X_i \cap X_j$.
\end{fact}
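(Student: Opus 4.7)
The plan is to prove Fact~\ref{fact1} directly from the three defining properties of a tree-decomposition, using in particular the connectivity property~(3). Since $v$ and $v'$ are adjacent (in~$H$, as the tree-decomposition is of~$H$), property~(2) gives a node $\ell \in I$ with $\{v,v'\} \subseteq X_\ell$. Removing the edge $e = ij$ splits $T$ into two components; call $T_i$ the component containing $i$ (hence also $k$) and $T_j$ the component containing $j$ (hence also $k'$). The proof will split into two cases according to which of these components contains the node $\ell$.

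In the case $\ell \in T_i$, the unique path in $T$ from $\ell$ to $k'$ must cross the edge $e$, so it passes through both endpoints $i$ and $j$ in that order. Since $v' \in X_\ell \cap X_{k'}$, property~(3) forces $v'$ to lie in every bag along this path, in particular in $X_i$ and in $X_j$; hence $v' \in X_i \cap X_j$. The case $\ell \in T_j$ is completely symmetric: the path from $\ell$ to $k$ crosses $e$, and $v$ belongs to $X_\ell \cap X_k$, so by property~(3) we conclude $v \in X_i \cap X_j$. In either case at least one of $v, v'$ lies in $X_i \cap X_j$, which is exactly what we need.

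There is no real obstacle here: the statement is a routine consequence of the connectivity axiom, and the only point that requires a moment's thought is that the auxiliary node $\ell$ must lie on one side or the other of the edge $e$, after which the path between $\ell$ and the node on the opposite side is forced to traverse both $i$ and $j$.
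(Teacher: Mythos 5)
Your proof is correct. The paper itself does not prove Fact~\ref{fact1} at all --- it simply recalls it as a ``simple standard fact about tree-decompositions'' --- so there is no proof in the paper to compare against; your argument, using property~(2) to obtain a bag $X_\ell$ containing both $v$ and $v'$ and then property~(3) to push whichever of $v,v'$ sits on the far side of $e$ through both bags $X_i$ and $X_j$, is exactly the standard derivation one would supply. One small point worth noting: the statement as printed says ``adjacent in $G$'', but in context the tree-decomposition is of $H = G\setminus E'$, and the fact is applied only to edges of $H$; you correctly read it this way.
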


For any edge $e$ in $T$ let $T_e$ denote the non-root component of $T \backslash e$, and let $V_e$ be the set of vertices contained in the bags of $T_e$.
 
If $w(V_e) <s$, then orient $e$ toward the root vertex, otherwise orient $e$ away from the root vertex. (See Figure~\ref{fig.twcut} for an illustration.)

 At least one node in $T$ has out-degree zero: fix such a node~$i$. Notice that~$i$ is not the root (since $s \leq 2m-d$), and~$i$ is not a leaf (since then $|X_i|=1$ and so $w(X_i) \leq d<s$). We delete the edges of $H$ incident with the vertices in the bag $X_i$.  Thus we delete at most $(t+1)d$ edges from $H$ to form $H'$.  Let~$e$ be the edge incident with node~$i$ which lies on the path from the root vertex to node~$i$.  Let $U_0= V(G) \setminus V_e$.  Since $w(V_e) \geq s$ we have $w(U_0) \leq 2m-s$.

Since~$i$ is not a leaf in $T$, other than its neighbour along edge $e$, $i$ has neighbours $j_1, \ldots, j_h$ for some $h \geq 1$. 
Suppose first that $h=1$, so there is exactly one such neighbour $j_1$ (not along the edge $e$). 
Since the edge $ij_1$ is oriented towards $i$, we have
$w(V_{i j_1})<s \leq w(V_e)$, and so we cannot have $X_{j_1} \supseteq X_i$: hence $X_{j_1}= X_i \setminus \{v\}$ for some $v \in X_i$.
Let $U_1=V_{ij_1}$ and $U_2 = X_i \setminus U_1=\{v\}$.
Then $w(U_1)<s$ and $w(U_2) =w(v) \leq d < s$, and 
$V(G)$ is partitioned into $U_0 \cup U_1 \cup U_2$. Further, by Fact~\ref{fact1}, any cross-edges in $H$ must be incident to vertices in $X_i$, and so there are no cross-edges in $H'$.

Now suppose that $h \geq 2$, so node $i$ has multiple neighbours.  Let  $U_1=V_{ij_1}, U_2=V_{ij_2}\setminus U_1,$ $\ldots, U_{h} = V_{ij_h} \setminus (U_1 \cup \ldots \cup U_{h-1}) $.
As before, the orientations of the edges incident with node~$i$ shows that $w(U_j)<s$ for each~$j =1,\ldots,h$.  Discard any empty sets amongst these sets~$U_j$. 
Finally, note that $X_{j_1}\cup X_{j_2}\supseteq X_i$, since if neither of $X_{j_1}$ and $X_{j_2}$ contains $X_i$ then $X_{j_1}=X_i\setminus \{v_1\}$ and $X_{j_2}=X_i\setminus \{v_2\}$ for some $v_1\neq v_2$ in $X_i$.  Hence $U_1\cup U_2\supseteq X_i$ and so $\cup_{j=0}^{h} U_j=V(G)$. Further, as before, by Fact~\ref{fact1} any cross-edges in $H$ must be incident to vertices in $X_i$, and so there are no cross-edges in $H'$.
\end{proof}
\smallskip
\begin{proof}[Proof of Theorem~\ref{thm.tw}] \;\;
Write $d$ for the maximum degree $\Delta$ of $G$ (note that $G$ will shrink during the proof but $d$ stays unchanged). Since $\q(G) \geq 0$ for any graph $G$ we need to consider only the case where $m \geq 4(t+1)d$. 
Let $s= 2((t+1)dm)^{\frac12}$.  Note that $s \geq 4(t+1)d$. 

Set $\tilde{G}=G$ and $\tilde{m}=e(\tilde{G})$. Observe that $s>d$, indeed $s \geq 2d$. As long as $2\tilde{m} \geq s+d$ we use the last lemma repeatedly to `break off parts' $U_1, U_2, \ldots$ and replace $\tilde{G}$ by its induced subgraph on $U_0$, where $\ds(U_0)\leq 2\tilde{m}-s$.  Suppose that we stop after $j$ steps, with $2\tilde{m}=x$ where $0 \leq x <s+d$. Since at each step the degree sum of $\tilde{G}$ decreases by at least $s$, we must have $x \leq 2m - js$, so $js \leq 2m-x$. At this stage we have lost at most $j(t+1) d$ edges, and each of the parts `broken off' from $G$ has degree sum $< s$.
We {\bf claim} that we can refine the current partition of $V(G)$ to a partition $\cA$ such that each part has degree sum $<s$ and the number of cross-edges in $G\setminus E'$ (edges lost) is at most $\frac{2m}{s} (t+1)d$. There are two cases.
  
Suppose first that $x<s$: then we have finished in $j$ steps, $j \leq 2m/s$, and we have lost at most $\frac{2m}{s} (t+1)d$ edges in $G \setminus E'$, as required.
Suppose instead that $x \geq s$.  Then $js \leq 2m-x \leq 2m-s$, so $j +1 \leq 2m/s$.  We take one more step, with reduced threshold $s'=s-d$.  Note that $d< s' \leq 2\tilde{m} -d$ (where $2\tilde{m}=x$).
Thus we can apply Lemma~\ref{lem.twcutdir} with the value $s'$, to complete the proof of the claim, since
  \[ 2\tilde{m}-s' < s+d-(s-d) = 2d \leq s, \] 
and so we stop after $j+1$ steps.  
  
Now 
$q^E_\cA(G) \geq 1 -  |E'|/m - 2(t+1)d/s$.
Also $0\leq x_i < s$ and $\sum_i x_i \leq 2m$ together imply $\sum_i x_i^2 < 2ms$; and so 
\begin{equation}\label{eq.son2m}
q_\cA^D(G) \leq \frac{2ms}{4m^2}= \frac{s}{2m}.
\end{equation}
Hence, by our choice of $s$,
\[ 1-|E'|/m - q_\cA(G) \leq \tfrac{2(t+1)d}{s} + \tfrac{s}{2m} = 2 \left( \tfrac{(t+1)d}{m}\right)^{\frac12},\]
which completes the proof.
 \end{proof}
\medskip


We may deduce Proposition~\ref{cor.allrreg} quickly from Theorem~\ref{thm.tw}.

\begin{proof}[Proof of Proposition~\ref{cor.allrreg}]\label{cor.proof} 
For each connected component $H$ of $G$, do the following. In $H$ choose a spanning tree together with one extra edge (observe that $H$ is not a tree since $r \geq 2$), and let $E'_H$ be the set of edges not chosen. 

Each unicyclic graph has treewidth 2, 
so $\tw(H\setminus E'_H)=2$. Define $E'=\cup_H E'_H$, and note that $\tw(G\setminus E')=2$ and $|E'|=m-n$, where $G$ has $m=r n/2$ edges.  Hence by Theorem~\ref{thm.tw}
\[ \q(G) \geq 1-2\big(\frac{3r}{m}\big)^{\frac12}- (1-\frac{n}{m}) = \frac{2}{r} - 2 \big(\frac{6}{n}\big)^{\frac12}\] 
as required.
\end{proof}


\section{Proofs for minimum and random modularity, $q^-_r(n)$ and $\q(\Gr)$} \label{sec.minrand-proofs}

In this section we prove Theorem~\ref{thm.rlarge-min}, giving a lower bound on $q_r^-(n)$; and Theorems~\ref{thm.kwus} and~\ref{thm.rlarge-random} giving lower and upper bounds which hold whp on $\q(\Gr)$, the former for small $r$ ($r=3,\ldots,12$) and the latter for large $r$.
The lower bound proofs are all centred around bisection width, and the upper bound proofs around edge-expansion.  It is thus natural and convenient to prove all the lower bounds in the theorems first and then the upper bounds.

\subsection{Bisection width and lower bounds on modularity}
\label{subsec.bw-ubs}

Define the \emph{bisection width} $\bw(G)$ of an $n$-vertex graph $G$ to be
\[\bw(G) = \min_{|U| = \lfloor \frac{n}{2} \rfloor} e(U, \bar{U})\]
where the minimum is over all sets $U$ of $\lfloor \frac{n}{2} \rfloor$ vertices, and 
$\bar{U}$ denotes $V(G)\setminus U$. A corresponding minimising partition shows that, 
for an $r$-regular graph $G$,
\begin{equation}\label{eqn.qbw}
 \q(G) \geq \frac{1}{2} - \frac{2\bw(G)}{rn} -  \frac1{2n^2}
\end{equation}
where we do not need the last (small) term if $n$ is even.

\begin{proof}[Proof of  Theorem~\ref{thm.rlarge-min}] \hspace{.1in}

Now consider general $r$.  By Theorem 1.1 of Alon~\cite{alon1997edge}, 
there is a constant $c>0$ such that, for all $r$ and all sufficiently large $n$, all $n$-vertex $r$-regular graphs $G$ satisfy
\[ \bw(G)/n \leq r/4 - c \sqrt{r}. \] 
Hence, for each such graph $G$, 
using also~(\ref{eqn.qbw}) we have
\[ \q(G) \geq 2c/ \sqrt{r} -  \frac1{2n^2} \geq  c/ \sqrt{r} \] 
 for $n$ sufficiently large.
\end{proof}

\begin{proof}[Proof of lower bounds in Theorem~\ref{thm.kwus}]
\hspace{.1in}
 
Lower bounds for $r=3$ to $8$ are directly from the about $2/r$ bound in Proposition~\ref{cor.allrreg}, so consider larger $r$. It was shown in~\cite{diaz2007bounds} that whp the bisection width of a random 12-regular graph is at most 1.823n. By~(\ref{eqn.qbw}), this implies that whp $\q(G_{n,12})>0.196$, as given in Theorem~\ref{thm.kwus} (Table I). Similar calculations apply for $r=9,10,11$ which have bisection widths at most 1.2317, 1.4278, 1.624 times $n$ respectively~\cite{diaz2007bounds}. (Currently known results on bisection width do not improve on the $2/r$ lower bound from Proposition~\ref{cor.allrreg}.)
\end{proof}

\begin{proof}[Proof of lower bound in Theorem~\ref{thm.rlarge-random}]
\hspace{.1in}

For large $r$, Dembo et al.\ \cite{dembo} [Theorem~1.5] show that whp $\bw(\Gr)/n = r/4-c'\sqrt{r}/2+o(\sqrt{r})$, where the $o(\sqrt{r})$ error term is as $r\to \infty$ and $c'=0.76321\pm 0.00003$. By~(\ref{eqn.qbw}) this provides the lower bound in Theorem~\ref{thm.rlarge-random}.
\end{proof}


\subsection{Edge-expansion and upper bounds on modularity}
\label{subsec.ee-lbs}
In this subsection we introduce graph parameters $\beta(G)$, $\lambda(G)$, $i_u(G)$, $\alpha(G)$ and $\beta'(G)$ related to edge-expansion; and give two lemmas concerning them, in preparation for proving the upper bounds on $\q(G)$ in Theorems~\ref{thm.kwus} and~\ref{thm.rlarge-random}.

First we introduce a useful quantity $\beta(G)$ for a regular graph $G$, related to edge expansion. For a non-empty set $S$ of vertices in a graph $G$, let $\bar{d}(S)$ denote the average degree of the induced subgraph on $S$, so 
$\bar{d}(S) = 2e(S)/|S|$.  Now let $r \geq 2$ and suppose that $G$ is $r$-regular and has $n$ vertices.  Let 
\[\beta(G) = \max_{S \neq \emptyset}  \left\{\frac{\bar{d}(S)}{r} - \frac{|S|}{n} \right\}\]
where the maximum is over all non-empty sets $S$ of vertices. 
Also, given an $n$-vertex graph~$G$ such that the adjacency matrix has eigenvalues $\lambda_ 1 \geq .. \geq \lambda_n$,  let
\[ \lambda(G) = \max_{i>1} |\lambda_i|  \;\;\; (\: =\max\{|\lambda_2|,|\lambda_n|\} \: ).\]

The following lemma is the key to our upper bounds on $\q$ for random regular graphs. A more general result implying that $\q(G) \leq \lambda/r$ appeared earlier in~\cite{van2010spectral} statement (18), phrased in terms of the `modularity matrix' of $G$, which has largest eigenvalue equal to $\lambda(G)$ when $G$ is regular. We introduce $\beta(G)$ and give a short and straightforward proof.

\begin{lemma}\label{lem.ub1}
Let $G$ be an $r$-regular graph, let $\beta=\beta(G)$, and let $\lambda= \lambda(G)$. Then
\[ \q(G)  \leq \beta \leq \lambda/r.\]
\end{lemma}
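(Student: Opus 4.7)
Both inequalities have short, self-contained arguments; I will do them separately.

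For the first inequality $\q(G)\leq \beta$, the plan is to rewrite $q_\cA(G)$ in a form where the parameter $\beta$ appears naturally. Since $G$ is $r$-regular on $n$ vertices, $m=rn/2$ and $\vol(A)=r|A|$ for every part $A$, so a direct substitution into the definition of modularity gives
\[
q_\cA(G)=\frac{2}{rn}\sum_{A\in\cA}e(A)-\frac{1}{n^2}\sum_{A\in\cA}|A|^2
=\sum_{A\in\cA}\frac{|A|}{n}\left(\frac{\bar d(A)}{r}-\frac{|A|}{n}\right),
\]
using $2e(A)=\bar d(A)|A|$. Each non-empty part satisfies $\bar d(A)/r-|A|/n\leq \beta$ by the definition of $\beta$, and the weights $|A|/n$ sum to $1$, so $q_\cA(G)\leq \beta$. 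Taking a maximum over partitions $\cA$ gives $\q(G)\leq \beta$.

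For the second inequality $\beta\leq \lambda/r$, the plan is a standard spectral argument. Let $M$ be the adjacency matrix of $G$; because $G$ is $r$-regular, the all-ones vector $\mathbf 1$ is an eigenvector with eigenvalue $r$, and on the orthogonal complement $\mathbf 1^{\perp}$ the operator $M$ has spectral radius at most $\lambda$. Fix a non-empty set $S$ and decompose its indicator $\mathbf 1_S=\alpha \mathbf 1+v$ with $v\perp \mathbf 1$, so that $\alpha=|S|/n$ and $\|v\|^2=|S|(1-|S|/n)$. Since $\mathbf 1_S^{\top}M\mathbf 1_S=2e(S)=\bar d(S)|S|$, expanding gives
\[
v^{\top}Mv=\bar d(S)|S|-r|S|^2/n=r|S|\left(\frac{\bar d(S)}{r}-\frac{|S|}{n}\right).
\]
Applying $|v^{\top}Mv|\leq \lambda\|v\|^2$ and dividing by $r|S|$ then yields
\[
\frac{\bar d(S)}{r}-\frac{|S|}{n}\leq \frac{\lambda}{r}\left(1-\frac{|S|}{n}\right)\leq \frac{\lambda}{r},
\]
and taking the maximum over non-empty $S$ gives $\beta\leq \lambda/r$.

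Neither step looks hard: the first is just the bookkeeping of rewriting $q_\cA$ in a weighted-average form so that $\beta$ drops out immediately, and the second is the textbook expander/Cheeger-style estimate for $r$-regular graphs. If anything, the only place to be careful is confirming that we only need to compare with $\beta$ over non-empty parts (which is automatic because a partition has non-empty classes), and that the orthogonal decomposition uses the fact that $\mathbf 1$ is an eigenvector of $M$, which requires $r$-regularity.
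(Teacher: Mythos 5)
Your proof is correct and follows the same approach as the paper: the first inequality is obtained by exactly the same rewriting of $q_\cA(G)$ as a weighted average of the terms $\bar d(A)/r - |A|/n$, and the second is the same spectral bound, except that you prove the Alon--Chung/Alon--Spencer expander-mixing inequality inline (via the orthogonal decomposition $\mathbf 1_S = \alpha\mathbf 1 + v$) whereas the paper simply cites it.
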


\begin{proof}
Let $G$ have $n$ vertices, let $S$ be a non-empty set of vertices, and let $u=|S|/n$.
By Corollary 9.2.6 of Alon and Spencer~\cite{alon2000probabilistic} (see also Lemma 2.3 of Alon and Chung~\cite{alon1988explicit})
\[ |e(S)-\frac12 r u^2n|
\leq \frac12 \lambda un.\]
Hence
\[ \left| \frac{\bar{d}(S)}{r} - u \right| 
= \frac{|e(S)-\frac12 r u^2n|}{\frac12 run} \leq 
 \lambda/r;\]
and so $\beta \leq \lambda/r$.
Now consider any partition $\cA=\{A_1, \ldots, A_k\}$ of $V(G)$. Letting $u_j=|A_j| / n$, we have
\begin{eqnarray*}
 q_\cA(G)	
			&=& \sum_j  \left(\frac{2e(A_j)}{r n} - \frac{(u_j r n)^2}{(rn)^2} \right) \\
			&=&  \sum_j u_j \left( \frac{\bar{d}(A_j)}{r} - u_j\right)\\
			& \leq &  \beta \, \sum_j u_j   \;\; = \; \beta.
\end{eqnarray*}
Hence $\q(G) \leq \beta$, as required.
\end{proof}
\smallskip

Now we relate $\beta(G)$ to edge expansion and the quantity $\alpha(G)$ defined below, so that we can use calculations from~\cite{KolWorm}.
Following the notation of~\cite{KolWorm}, for $0<u\leq \frac{1}{2}$ we define the $u$-\emph{edge-expansion} $i_u(G)$ of an $n$-vertex graph $G$ by setting  
$$i_u(G)= \min_{0<|S|\leq un} \frac{e(S, \bar{S})}{|S|}$$
where the minimum is over non-empty sets $S$ of at most $un$ vertices
(and the value is taken to be $\infty$ if $un<1$).
Observe that $i_{1/2}(G)$ is the usual edge expansion or isoperimetric number of $G$.  Also, set
\[ \alpha(G) = \min_{0<u \leq \frac12} \{u+i_u(G)/r \}.\]
It is easy to see that we can also write $\alpha(G)$ as
\[ \alpha(G) = \min_{0<|S| \leq \frac12 n} \left\{\frac{|S|}{n}+ \frac{e(S, \bar{S})}{r|S|} \right\}.\]
Now consider a quantity $\beta'$ like $\beta$ but which at first sight might be smaller: let 
\[\beta'(G) = \max_{0<|S| \leq \frac12 n}  \left\{\frac{\bar{d}(S)}{r} - \frac{|S|}{n} \right\}.\]

\begin{lemma} \label{lem.alphabeta}
For each regular graph $G$,
\[ \beta(G) = \beta'(G) = 1-\alpha(G).\]
\end{lemma}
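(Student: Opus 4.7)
The plan is to prove the two equalities in turn. The first, $\beta'(G)=1-\alpha(G)$, is an immediate consequence of $r$-regularity: for any non-empty $S$, the identity $r|S|=2e(S)+e(S,\bar{S})$ gives $\bar{d}(S)/r+e(S,\bar{S})/(r|S|)=1$, so
\[
\frac{|S|}{n}+\frac{e(S,\bar{S})}{r|S|}=1-\Bigl(\frac{\bar{d}(S)}{r}-\frac{|S|}{n}\Bigr).
\]
Taking the minimum of the left side over $0<|S|\le n/2$ yields $\alpha(G)$ (using the reformulation of $\alpha$ noted just before the lemma), while the corresponding maximum on the right is precisely $\beta'(G)$; hence $\alpha(G)+\beta'(G)=1$.

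For $\beta(G)=\beta'(G)$ only the direction $\beta(G)\le\beta'(G)$ needs work, since the reverse inequality is immediate from the definitions. Writing $\phi(S):=\bar{d}(S)/r-|S|/n$, my plan is to establish the clean symmetric identity
\[
|S|\,\phi(S)=|\bar{S}|\,\phi(\bar{S}) \qquad \text{for every non-empty proper } S,
\]
and use it to reduce any $S$ with $|S|>n/2$ to its (smaller) complement. Both sides equal $|S||\bar{S}|/n - e(S,\bar{S})/r$: indeed,
\[
|S|\phi(S) = \tfrac{2e(S)}{r} - \tfrac{|S|^2}{n} = \tfrac{r|S|-e(S,\bar{S})}{r} - \tfrac{|S|^2}{n} = |S|\Bigl(1-\tfrac{|S|}{n}\Bigr) - \tfrac{e(S,\bar{S})}{r} = \tfrac{|S||\bar{S}|}{n} - \tfrac{e(S,\bar{S})}{r},
\]
and swapping the roles of $S$ and $\bar{S}$ in this computation produces the other side.

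Given the identity, for any $S$ with $|S|>n/2$ set $T=\bar{S}$, so $|T|\le n/2$ and $\phi(S) = (|T|/|S|)\phi(T)$ with the scalar $|T|/|S|$ in $[0,1]$. If $\phi(T)\ge 0$, this gives $\phi(S)\le\phi(T)\le\beta'(G)$ directly; if $\phi(T)<0$, the identity forces $\phi(S)\le 0\le\beta'(G)$ as well (in the regime of interest $\beta'(G)\ge 0$, which is guaranteed whenever $G$ admits some ``denser-than-average'' set of size at most $n/2$). Either way $\phi(S)\le\beta'(G)$, so $\beta(G)\le\beta'(G)$.

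The main obstacle I expect to face is spotting the symmetric identity $|S|\phi(S)=|\bar{S}|\phi(\bar{S})$, which in one stroke turns the ``all non-empty $S$'' maximum defining $\beta$ into the ``small $S$'' maximum defining $\beta'$. After that, the proof is just a quick sign analysis and the translation in the first paragraph.
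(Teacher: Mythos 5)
Your argument is essentially the paper's proof in a cleaner wrapper: the symmetric identity $|S|\,\phi(S)=|\bar{S}|\,\phi(\bar{S})$ becomes, after clearing denominators, exactly the paper's relation $2e(S)=2e(\bar{S})+(2u-1)rn$ (with $u=|S|/n$), and your final sign analysis plays the role of the paper's inequality $\phi(S)\le\frac{(1-u)}{u}\beta'\le\beta'$. Both proofs quietly require $\beta'(G)\ge 0$ at the last step; you are right to flag this (indeed the stated equality can fail when $\beta'(G)<0$, e.g.\ $\beta(K_4)=0$ but $\beta'(K_4)=-\tfrac{1}{6}$), whereas the paper leaves the condition implicit -- harmless in its applications, where the graphs of interest have $\beta'\ge 0$.
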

\begin{proof}
Note first that
$ \tfrac{\bar{d}(S)}{r} = \tfrac{r|S| - e(S,\bar{S})}{r|S|} = 1- \tfrac{e(S,\bar{S})}{r|S|}$, so
\[  \frac{\bar{d}(S)}{r}  -\frac{|S|}{n} =  1- \left(\frac{|S|}{n} + \frac{e(S,\bar{S})}{r|S|} \right).\]
It follows directly that $\beta'(G) = 1- \alpha(G)$.

Now write $\beta'$ for $\beta'(G)$: we shall show that $\beta'= \beta(G)$.
Let $S$ be a set of vertices with $|S|/n=u > \frac12$.
We must show that  $\frac{\bar{d}(S)}{r} -u \leq \beta'$. 
Since $2e(S)=r|S| - e(S,\bar{S})$ and similarly $2e(\bar{S})=r(n-|S|) - e(S,\bar{S})$, we have
\[ 2e(S) = 2e(\bar{S}) +r un-r(1\!-\!u)n = 
2e(\bar{S}) + (2u\!-\!1)rn. \]
Also $\frac{2e(\bar{S})}{r(1- u) n} - (1-u) \leq \beta'$, so
$2e(\bar{S}) \leq r (1-u)n \, (1-u+\beta')$. Hence
\begin{eqnarray*}
\frac{\bar{d}(S)}{r} -u \;
&=& \;
\frac{2e(S)}{r un} - u \;\; = \;\; 
  \frac{2e(\bar{S})+(2u-1)rn}{r un} -u\\
& \leq &
   \frac{r(1-u)n (1-u+\beta') +(2u-1)rn}{run} -u\\
   &=&
    \frac{(1-u) \beta'}{u} \; \leq \beta'
\end{eqnarray*}
since $u \geq \frac12$. This completes the proof.
\end{proof}

\begin{proof}[Proof of upper bounds in Theorem~\ref{thm.kwus}] \hspace{.1in}

Fix an integer $r \geq 3$.  By Lemmas~\ref{lem.ub1} and~\ref{lem.alphabeta}, $1- \q(G) \geq \alpha(G) = \min_{u\in (0,1/2]} f(u)$ where $f(u)=u+ i_u(G)/r$.  Thus we want a lower bound on $\alpha(G)$.  Let $0 \leq u_0<u_1\leq 1/2$.  Since $i_u(G)$ is non-increasing in $u$, for $u\in (u_0,u_1]$ we have
\[ f(u) = u+ \frac{i_u(G)}{r}  \geq u + \frac{i_{u_1}(G)}{r} > f(u_1)-|u_1-u_0|. \]
Fix $\eps >0$, and let $n=\lceil 1/\eps \rceil$.
Then by considering the intervals $((i-1)/2n, i/2n]$ for $i=1,\ldots n$, we see that
\[ \alpha(G) > \min_{i=1,\ldots,n} f(i/2n) \; - \eps/2,\]
and so we need lower bound at most $n$ values $i_u(G)$.


Now consider a random $r$-regular graph $\Gr$. For each particular $u\in (0,1/2]$, by Theorem~1.3 in~\cite{KolWorm} and the discussion in Section~7 of that paper, we see that whp $i_u(\Gr)\geq y/u$ for each $0<y< ru(1-u)$
with $\hat{f}_r(u,y)<0$, where \[\hat{f}_r(u,y)=\log \frac{r^{r/2}u^{(r-1)u}(1-u)^{(r-1)(u-1)}}{y^y(ru-y)^{(ru-y)/2}(r-ru+y)^{(r-ru+y)/2} }.\]

(It is known~\cite{KolWorm} that $\hat{f}_r(u,y)$ is strictly concave in $y$, positive at $y= ru(1-u)$ and negative for sufficiently small $y>0$.)  For a random cubic graph $G_{n,3}$, by finding appropriate values $y$, we deduce that whp $\alpha(G_{n,3}) > 0.196$, and so whp $\q(G_{n,3})< 0.804$. Repeating for other values of $r$ yields the upper bounds given in Table~\ref{tab:rreg}. This completes the proof of Theorem~\ref{thm.kwus} (which first appeared in the thesis of the second author~\cite{thesis}).
\end{proof}

\begin{proof}[Proof of upper bound in Theorem~\ref{thm.rlarge-random}]
\hspace{.1in}

Let $\Gr$ be a random $r$-regular graph (with $r$ fixed). Friedman~\cite{friedman2008proof} showed that, for each $\eps>0$, whp $\lambda(\Gr) \leq 2\sqrt{r-1} + \eps$. Thus, by taking $\eps< 2\sqrt{r}-2\sqrt{r-1}$, we see that whp $\lambda(\Gr) < 2\sqrt{r}$. Hence by Lemma~\ref{lem.ub1}, whp $q^*(\Gr) < 2/ \sqrt{r}$.
This completes the proof of Theorem~\ref{thm.rlarge-random}.
\end{proof}


\section{Proofs for maximum modularity $q_r^+(n)$}
\label{sec.maxmod-proofs}

In this section our main task is to prove Proposition~\ref{prop.maxqr}. But first let us deal with Proposition~\ref{prop.connub}, which has a short and easy proof, see~\cite{FortBart2008} equation (11), or see for example~\cite{dasgupta2013complexity} Lemma 2.1.  We prove it here as we want the exact result.

\begin{proof}[Proof of Proposition~\ref{prop.connub}] \hspace{.1in}

Let the graph $G$ have $m \geq 1$ edges (we do not yet assume that $G$ is connected).  Consider a partition $\cA=\{A_1, \ldots, A_k\}$ of $V(G)$ into $k \geq 2$ parts, where $A_j$ has degree sum $d_j$ (and so $\, \sum_j d_j =2m$).  Then 
\begin{equation} \label{eqn.degtaxreg}
 q_{\cA}^{D}(G) = \frac1{4m^2} \sum_j d_j^2 \geq \frac1{k}
 \end{equation}
since $(1/k) \sum_j d_j^2 \geq (2m/k)^2$ by convexity.
If $G$ is connected, then there must be at least $k-1$ cross-edges; and then, since $k/m + 1/k \geq 1/2\sqrt{m}$, 
\[ q_{\cA}(G) \leq 1- \frac{k-1}{m} - \frac1{k} \leq 1+\frac1{m} - \frac2{\sqrt{m}}. \] 
Similarly, if $G$ is 2-edge-connected, there must be at least $k$ cross-edges, and 
\[ q_{\cA}(G) \leq 1- \frac{k}{m} - \frac1{k} \leq 1- \frac2{\sqrt{m}}, \]
which completes the proof. 
\end{proof}

To prepare for the proof of Proposition~\ref{prop.maxqr}, we first give an equivalent expression for the modularity of a regular graph, and give a preliminary lemma.
 Observe that for $r$-regular $n$-vertex graphs $G$ we have
 \begin{equation}\label{eq.modis}
q_\cA(G)= 1-\sum_{A \in \cA} \frac{|A|}{n} \Big(\frac{e(A, \bar{A})}{r|A|} + \frac{|A|}{n} \Big)
\end{equation}
where $\bar{A}$ denotes $V(G)\backslash A$. Here the first term in the sum double counts the edges between the parts (and divides by twice the number of edges), and the second term takes care of the degree tax, which is now simply a function of the part sizes since $G$ is regular.

For $\emptyset \neq A \subseteq V(G)$ let $f_r(A,G) = e(A, \bar{A})/(r|A|)+ |A|/n$ and write $G[A]$ for the subgraph of $G$ induced by $A$. 
We shall see that the unique minimiser of $f$ over $r$-regular graphs is the graph $K_{r+1}$. More fully, assuming that $n \geq r+1$ and $rn$ is even (so that the set $\cG(n,r)$ of $r$-regular $n$-vertex graphs is non-empty), let $f_r^*(n)  := \min_{G\in \cG(n,r)} \min_{\emptyset \neq A \subseteq V(G)} f_r(A, G)$.

\begin{lemma} \label{lem.fstar}
For $n \geq r+1$ with $rn$ even; $f_r^*(n) = \frac{r+1}{n}$, and $f_r(A,G)$ achieves this minimum exactly when $G[A]=K_{r+1}$ (and this is a component of $G$).
\end{lemma}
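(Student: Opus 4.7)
The plan is to bound $f_r(A,G) = e(A,\bar A)/(r|A|) + |A|/n$ from below in terms of $a := |A|$, using the $r$-regularity of $G$.

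First I would use the degree-sum identity: since every vertex of $A$ has degree $r$ in $G$, summing degrees over $A$ gives $2e(A)+e(A,\bar A)=ra$, so
\[ e(A,\bar A) = ra - 2e(A), \qquad f_r(A,G) = 1 - \frac{2e(A)}{ra} + \frac{a}{n}. \]
To minimise $f_r$ we want $e(A)$ as large as possible relative to $a$. The induced subgraph $G[A]$ is simple and has maximum degree at most $r$, so
\[ e(A) \leq \min\!\Bigl(\tbinom{a}{2},\, ra/2\Bigr), \]
with the first bound binding when $a\leq r+1$ and the second when $a\geq r+1$.

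Next I would split into two cases on $a$. When $a\leq r+1$, plugging in $e(A)\leq\binom{a}{2}$ gives $e(A,\bar A)\geq a(r+1-a)$, hence
\[ f_r(A,G) \;\geq\; \frac{r+1-a}{r} + \frac{a}{n}. \]
The right-hand side is affine in $a$ with slope $-\tfrac1r+\tfrac1n<0$ (since $n\geq r+1>r$), so it is minimised over $a\in\{1,\dots,r+1\}$ at $a=r+1$, where it equals $(r+1)/n$. When $a\geq r+2$, simply $f_r(A,G)\geq a/n \geq (r+2)/n > (r+1)/n$. Together this gives $f_r(A,G)\geq (r+1)/n$ for every choice of $(G,A)$.

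For the equality case, equality forces us into the first case at $a=r+1$ with both bounds tight: $e(A)=\binom{r+1}{2}$ forces $G[A]=K_{r+1}$, and $e(A,\bar A) = (r+1)(r+1-(r+1)) = 0$ forces $A$ to be a union of components; since $G[A]$ is connected on $r+1$ vertices, $A$ itself is a component. Finally, I would note that the minimum is actually attained: if $n=r+1$ take $G=K_{r+1}$, and if $n$ is large enough that a suitable $r$-regular graph on the remaining $n-r-1$ vertices exists (which is the regime needed for the application in Proposition~\ref{prop.maxqr}), take the disjoint union of $K_{r+1}$ with such a graph; the parity condition $r(n-r-1)$ even is automatic from $rn$ even and $r(r+1)$ even.

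There is no serious obstacle: the only thing to be careful about is the direction of the affine slope in the first case, for which $n\geq r+1$ is exactly what is needed, and the clean separation of the tight subcases $a=r+1$ vs.\ $a\geq r+2$.
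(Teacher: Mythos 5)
Your proof is correct and takes essentially the same approach as the paper: the paper also bounds $e(A,\bar A)\geq |A|\,(r+1-|A|)$ by observing that when $|A|=r+1-\ell$ each vertex of $A$ has at least $\ell$ edges leaving $A$, then splits on $|A|$ versus $r+1$ and does the same algebra, so your rewrite via the degree-sum identity and the affine monotonicity in $a$ is just a compact repackaging of the same argument. Your explicit remark at the end that attainment requires a graph in $\cG(n,r)$ with a $K_{r+1}$ component, which need not exist for every admissible $n$, is a fair caveat; the paper glosses over this, but it does not affect the application in Proposition~\ref{prop.maxqr}, which only ever uses the lower bound $f_r(A,G)\geq(r+1)/n$ and its equality characterization.
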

\begin{proof}
Firstly note that if $G[A]=K_{r+1}$ then clearly $f_r(A,G)=(r+1)/n$. Now fix an $r$-regular graph $G$ on $n$ vertices, and let $A$ be a subset of the vertices such that $G[A]\neq K_{r+1}$. To prove the claim it will suffice to show that
\begin{equation}\label{eq.exactlywhat} \frac{e(A, \bar{A}) n }{r|A|} + |A| > r+1. 
\end{equation}

If $|A|>r+1$ then it is easy to see that~\eqref{eq.exactlywhat} holds. Similarly if $|A|=r+1$ then there must be edges from $A$ to the rest of the graph as we assumed $G[A]\neq K_{r+1}$, but $e(A,\bar{A})>0$ and $|A|=r+1$ together imply that~\eqref{eq.exactlywhat} holds, so we can assume that $|A|<r+1$. Set $|A|=r+1-\ell$ for some $\ell>0$. Observe that because $G$ is $r$-regular we have $e(A, \bar{A})\geq (r+1-\ell)\ell$. This is because each of the $|A|=r+1-\ell$ vertices can have at most $r-\ell$ of their incident edges within the part $A$ and so at least $\ell$ must be external. Hence,
$$\frac{e(A, \bar{A}) n }{r|A|} + |A| \geq \frac{(r+1-\ell)\ell n}{r(r+1-\ell)} +r+1-\ell = r+1 + \ell\Big(\frac{n}{r}-1\Big) \, > r+1$$
since $n>r$. This completes the proof of the lemma.
\end{proof}

\begin{proof}[Proof of Proposition~\ref{prop.maxqr} part (a)]
\hspace{.1in}

The sum in the expression for modularity in~\eqref{eq.modis} is a weighted average of terms $f_r(A,G)$ and so
$$q^+_r (n) = 1- \min_{G \in \cG(n,r) } \min_{\cA} \sum_{A \in \cA} \frac{|A|}{n} f_r(A,G) \leq 1-f_r^*(n)$$
with equality iff there is an $r$-regular graph $G$ with a vertex partition $\cA$ such that $\forall A\in \cA$, $f_r(A,G)=f_r^*(n)$. By the claim this means that a graph achieves the bound iff there is a vertex partition into disjoint copies of $K_{r+1}$. Hence if $(r+1)|n$, then $q^+_r(n) = 1-\frac{r+1}{n}$ with unique optimum graph the disjoint union of copies of $K_{r+1}$. Finally, for the case when $(r+1)$ does not divide $n$, for any graph on $n$ vertices there must be some part $A$ with $|A|\neq r+1$, and by the claim $f_r(A,G)>(r+1)/n$; and because modularity is a weighted average of such terms over all parts, we have $q^+_r(n)<1-(r+1)/n$.
\end{proof}

\begin{proof}[Proof of Proposition~\ref{prop.maxqr} part (b)] \hspace{.1in}

First we show that $g_r(n)$ (defined in~\eqref{eqn.qplus}) is a lower bound on $q_r^+(n)$ for certain values of $n$.  We see that if $r$ is even and $n \geq r(r+1)$, or if $r$ is odd and $n \geq (r-1)(r+3)/2$ (and $rn$ is even), then we have $q^+_r(n) \geq g_r(n)$. 

Consider the subcase when $r$ is even. Recall that we write $n$ as $a(r+1)+b$ where $a$ is a non-negative integer and  $0 \leq b \leq r$. Assume that $a \geq b$, which must hold when $n \geq r(r+1)$. Let $H_{r+2}$ be $K_{r+2}$ less a perfect matching, which is an $r$-regular graph on $r+2$ vertices.  Think of $n$ as $b(r+2)+(a-b)(r+1)$. Let $G^*_n$ be the $n$-vertex graph consisting of $b$ copies of $H_{r+2}$ and $a-b$ copies of $K_{r+1}$, all disjoint. Then with connected components partitions $\cC$,
\begin{eqnarray*}
q_{\cC}^D(G^*_n) & = &
\frac1{n^2} \left( b(r+2)^2 + (a-b) (r+1)^2 \right)\\
& = &
\frac1{n^2} \left( b(r+2)(r+1) + b(r+2)  + (a-b) (r+1)^2 \right)\\
& = &
\frac1{n^2} ( (r+1)n + b(r+2) )
\; = \; 1 - g_r(n).
\end{eqnarray*}
Hence $q^+_r(n) \geq q_{\cC}(G^*_n) = g_r(n)$, as required.

Now consider the subcase when $r \geq 3$ is odd. Observe $b$ must be even (and so $b \leq r-1$).  Assume that $a \geq b/2$, which must hold when $n \geq (r-1)(r+1)/2$.  
Let $H_{r+3}$ be $K_{r+3}$ less a 2-factor (that is, a spanning subgraph with each vertex degree 2, for example a Hamilton cycle), which is an $r$-regular graph on $r+3$ vertices.  (There is no $r$-regular graph with $r+2$ nodes, since $r+2$ is odd.)
Think of $n$ as $(r+3)b/2 + (r+1)(a-b/2)$.  Let $H_n^*$ be the $n$-vertex graph  consisting of $b/2$ copies of $H_{r+3}$ and $a-b/2$ copies of $K_{r+1}$, all disjoint. 
Then
\begin{eqnarray*}
q_{\cC}^D(H^*_n) & = &
\frac1{n^2} \left( (b/2)(r+3)^2 + (a\!- b/2) (r+1)^2 \right)\\
& = &
\frac1{n^2} \left( (b/2)(r+3)(r+1) + b(r+3)  + (a\!- b/2) (r+1)^2 \right)\\
& = &
\frac1{n^2} ( (r+1)n + b(r+3) )
\; = \; 1 - g_r(n).
\end{eqnarray*}
Hence $q^+_r(n) \geq q_{\cC}(H^*_n) = g_r(n)$, as required.

We have now shown in both cases that $q^+_r(n) \geq g_r(n)$ when claimed. 
Observe that
if $r$ is even, then $\frac{b(r+2)}{n^2} \leq \frac{r(r+2)}{n^2}$; and if $r$ is odd, then $\frac{b(r+3)}{n^2} \leq \frac{(r-1)(r+3)}{n^2} \leq \frac{r(r+2)}{n^2}$.  Hence we always have
\begin{equation} \label{eqn.grngeq} 
g_r(n) \geq 1- \frac{r+1}{n} - \frac{r(r+2)}{n^2}.
\end{equation}
The inequality~(\ref{eq.approxbestmod}) follows from inequality~(\ref{eqn.grngeq}), since $q^+_r(n) \geq g_r(n)$ for $n \geq r(r+1)$, as we have seen.
\smallskip

Next we use the above result to show that we need only to consider regular graphs with `small' connected components, and thus we need only to consider the connected components partition $\cC$.

Let $G \in \cG(n,r)$, and let $\cA$ be any vertex partition with a part $A'$ of size $a =|A'| \geq 2(r+1)$. Observe that $f_r(A',G) \geq a/n$, and for the other parts $A \in \cA$, $f_r(A,G) \geq (r+1)/n$ by the claim in the previous proof. Thus
\begin{eqnarray*}
1- q_{\cA}(G) &=&
\sum_{A \in \cA} \frac{|A|}{n} f_r(A,G)
\; \geq \; \frac{n-a}{n} \left(\frac{r+1}{n} \right) + \frac{a}{n} \cdot \frac{a}{n}\\
&=&
\frac{r+1}{n} + \frac{a}{n} \, \frac{a-(r+1)}{n}
\; \geq \;
\frac{r+1}{n} + \frac{2(r+1)^2}{n^2}. 
\end{eqnarray*}
Hence, by~(\ref{eqn.grngeq}) and the first part of the proposition,
\[ q_{\cA}(G) \leq 1- \frac{r+1}{n} - \frac{2(r+1)^2}{n^2} < g_r(n) \leq q_r^+(n)\]
if $n \geq r(r+1)$.
Therefore, for such $n$, if $q_{\cA}(G) = q^+_r(n)$, then each part of $\cA$ must have size at most $2r+1 < 2(r+1)$.

Now let $G$ have a connected component $H$ of size $s \geq 6 (r+1)$.  Suppose that $\cA$ is a partition for $G$ with $q_{\cA}(G) = q_r^+(n)$.  Then, by the above, $\cA$ must break $G$ into at least $s/(2(r+1))$ parts, and so there are at least $s/(2(r+1)) -1$ cross-edges for $\cA$.  Hence
\[ 1- q_{\cA}(G) \geq  \left(\frac{s}{2(r+1)}-1\right) \frac2{rn} + \frac{n-s}{n} \frac{r+1}{n} = \frac{r+1}{n} + \frac{s}{n} \left(\frac1{r(r+1)} - \frac{r+1}{n} \right) - \frac2{rn}.  \]
Now suppose that $n \geq 2r(r+1)^2$.
Then
\[ 1- q_{\cA}(G) \geq \frac{r+1}{n} + \frac{s}{n} \left(\frac1{2r(r+1)} \right) - \frac2{rn} 
\geq \frac{r+1}{n} + \frac1{rn}.\]
But $\frac1{rn} > \frac{r(r+2)}{n^2}$ (since $n > r^2(r+2)$), so by~(\ref{eqn.grngeq}) we have
$1- q_{\cA}(G) > 1-g_r(n)$, and so $
q_{\cA}(G) < g_r(n) \leq q^+_r(n)$.

Thus we need only to consider regular graphs with all components of size less than $6(r+1)$.
For an $r$-regular graph with a connected component $H$ on $h$ vertices, if $h< 2 \sqrt{n/r}$  (the `resolution limit' \cite{FortBart2008}) then no optimal partition breaks up this component. For, if some optimal partition breaks $H$ into $i \geq 2$ parts, then
\begin{eqnarray*}
0 & \leq & 
\frac{(hr)^2-i(hr/i)^2}{(nr)^2} - \frac{i-1}{nr/2} 
\; = \; \frac{i-1}{n} \left(\frac{h^2}{i n} - \frac2{r} \right),
\end{eqnarray*}
so $h^2 \geq 2i n/r \geq 4n/r$.  Thus for graphs with maximum component size $h$ and $n > rh^2/4$ vertices, the partition into connected components is the unique optimal partition.

Hence, for 
$n \geq 9 r (r+1)^2$, we need only to consider graphs with the connected components partition~$\cC$.  But now by the strict convexity of $f(x)=x^2$, we see that each graph with the maximum modularity must have at most two sizes of components, differing by one if $r$ is even and by 2 if $r$ is odd; and then it is easy to see that these must be the smallest two such sizes.
Hence the graphs $G^*_n$ and $H^*_n$ constructed earlier achieve the maximum modularity, and any graph achieving the maximum modularity must be of this form.
The only flexibility is in the choice of the missing 2-factor in the graphs $H_{r+3}$. This completes the proof of part (b), and thus of the whole of Proposition~\ref{prop.maxqr}.
\end{proof}

In Proposition~\ref{prop.maxqr} part (b), it is not hard to improve the bound $n \geq 9 r (r+1)^2$ by a constant factor, though possibly our bound is very pessimistic and $n \geq r(r+1)$ may suffice.


\section{Concluding remarks}
\label{sec.concl}

Two main contributions of this paper are (a) the numerical bounds for the typical modularity of random regular graphs $\Gr$ for $3 \leq r \leq 12$, which can be used to investigate the  significance of observed clustering, together with determining the asymptotic behaviour for large $r$; and (b) showing high modularity for graphs in families such as the low degree treelike graphs, and thus for example for random planar graphs. We also investigated the minimum modularity $q_r^-(n)$, and the maximum modularity $q_r^+(n)$.


Proposition~\ref{cor.allrreg} shows that the modularity of any large cubic graph is at least $2/3 -o(1)$; and in our simulations the highest modularity value of a partition found for a random cubic graph was about $0.68$ (though there is no guarantee this is close to optimal).
Does the minimum modularity $q_3^{-}(n) \to \tfrac23$ as $n \to \infty$? Perhaps we even have $\q(G_{n,3}) \to \frac23$ in probability as $n \to \infty$? If so, then random cubic graphs would give extremal examples for low modularity, but is it the case?  The opposite conjecture was presented by the first author at the Bellairs workshop on Probability, Combinatorics and Geometry in April 2016.
 
\begin{conj}\label{conj.cubic} 
There exists $\delta>0$ such that $q^*(G_{n,3}) \geq 2/3 + \delta$ whp.
\end{conj}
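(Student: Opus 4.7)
The plan is to exhibit, for a random cubic graph $G_{n,3}$, an explicit vertex partition achieving modularity at least $2/3+\delta$ whp for some absolute constant $\delta>0$. The spanning-unicyclic construction behind Proposition~\ref{cor.allrreg} extracts exactly $n$ internal edges out of $m=3n/2$, yielding $2/3+o(1)$, so to beat $2/3$ one needs a partition whose internal edge count exceeds that treewidth-$2$ ``budget'' by a positive fraction of $n$, while keeping the degree tax bounded away from $1/3$.

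My first step is to write $G_{n,3}=H\cup M$ where $H$ is a Hamilton cycle and $M$ is a perfect matching, a decomposition that exists whp by the classical theorem of Robinson and Wormald. The ``extra'' edges are exactly those of $M$, and the task reduces to partitioning the cycle-vertices so that a positive fraction of $M$-edges become internal without cutting too many $H$-edges or inflating the degree tax. A uniform partition of $H$ into arcs of length $L$ captures on average $L/2$ matching edges (each is internal with probability $\approx L/n$), giving modularity $\tfrac{2}{3}-\tfrac{2}{3L}-\tfrac{2L}{3n}$, optimised at $L\asymp\sqrt{n}$ to $\tfrac{2}{3}-\Theta(1/\sqrt{n})$; this is not enough. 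I would therefore use a non-uniform partition: identify a positive-density subset $M_0\subseteq M$ of ``short'' matching chords whose endpoints lie within bounded cyclic distance on $H$, and form constant-size parts around each chord of $M_0$ containing both endpoints plus neighbouring cycle vertices; each such small part has density strictly above $1$, and the remaining $H$-vertices are partitioned into arcs of length $\asymp\sqrt{n}$ as before. Absorbing a $\Theta(1)$ fraction of $M$-edges this way would lift the edge contribution to $\tfrac{2}{3}+\Omega(1)$ while adding only a lower-order term to the degree tax.

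The main obstacle is showing that $G_{n,3}$ admits a Hamilton cycle $H$ against which a linear fraction of $M$ is short: for a uniformly chosen Hamilton cycle, standard configuration-model calculations give only $O(\log n)$ matching chords of bounded cyclic length, so a single cycle does not suffice. To bypass this, one could exploit that $G_{n,3}$ contains exponentially many Hamilton cycles whp and try to find one well-aligned with $M$; a cleaner variant is to drop the Hamiltonian requirement and instead seek any $2$-factor of $G_{n,3}$ whose cycles are many and short, so that taking them as parts (together with a refining of any long cycles into arcs) already beats the treewidth bound. Proving such a structural statement seems to need either a switching argument on the configuration model, or a contiguity/coupling argument comparing $G_{n,3}$ to a model with planted short chords, and this is where genuinely new input beyond the tools of the present paper appears to be required---consistent with the statement being presented as an open conjecture rather than a theorem.
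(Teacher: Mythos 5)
The statement you are addressing is presented in the paper as an open \emph{conjecture} (Conjecture~\ref{conj.cubic}), not a theorem, and the paper contains no proof of it; there is therefore nothing to compare your attempt against. You have in fact correctly recognised this situation, and your writeup is best read as an informed account of why the known tools fall short rather than as a proof.

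That said, your exploratory analysis is sound on its own terms, and worth a few comments. The computation for uniform arcs on the Hamilton cycle is correct: with arcs of length $L$ you get edge-contribution $\tfrac{2}{3}-\tfrac{2}{3L}+\tfrac{L}{3n}$ and degree-tax $\tfrac{L}{n}$, hence modularity $\tfrac{2}{3}-\tfrac{2}{3L}-\tfrac{2L}{3n}$, optimised at $L\asymp\sqrt n$ to $\tfrac{2}{3}-\Theta(n^{-1/2})$. This actually recovers, by a different route, essentially the same bound that Proposition~\ref{cor.allrreg} gives via treewidth: in both cases one captures all $H$-edges save $\Theta(\sqrt{n})$ cut edges plus negligibly many $M$-edges, and the degree tax and cut losses balance at scale $\sqrt{n}$. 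You also correctly identify the obstruction to doing better: in the Hamilton-plus-matching model the expected number of $M$-chords of bounded cyclic length is $O(1)$ per length class and so only $O(\log n)$ or so in total, far from the linear fraction needed to push the edge contribution up by a constant. Your two suggested workarounds (optimising over the exponentially many Hamilton cycles of $G_{n,3}$ to find one well-aligned with $M$, or seeking a $2$-factor with many short cycles) are precisely the kind of new structural input that would be needed, and proving either whp for $G_{n,3}$ would be a genuine advance beyond the methods in this paper. Your conclusion that the conjecture remains open and requires tools not present here is accurate and honestly stated.

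One small correction for accuracy: you attribute the decomposition $G_{n,3}=H\cup M$ into a Hamilton cycle plus a perfect matching to Robinson and Wormald. What Robinson and Wormald proved is that $G_{n,3}$ is Hamiltonian whp; the stronger statement that the complementary $2$-regular graph (i.e.\ what is left after removing the matching edges incident to the cycle, or equivalently that the edges off a Hamilton cycle form a perfect matching) indeed follows in the cubic case simply because removing a Hamilton cycle from a cubic graph leaves a $1$-regular graph, so your decomposition is fine, but it is worth spelling out that this is automatic from $3$-regularity rather than an additional theorem. This does not affect your argument.
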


Our results on cubic graphs highlight the importance of choosing the right baseline for modularity to assess significance.
For suppose we have a cubic network $G$ with $n$ vertices, where $n$ is large.  We have seen in Corollary~\ref{cor.allrreg} and Theorem~\ref{thm.kwus} that $q^*(G) > 0.66$, just because $G$ is cubic.  Hence, unless $q_0>0.66$, the fact that a partition ${\cal A}$ has $q_{\cal A}(G) =q_0$ should certainly not be considered as evidence that $\cal A$ shows community structure. 
\smallskip

We have discussed the modularity of random regular graphs, $\q(\Gr)$. This is very different from considering the modularity of a random partition of a fixed graph.  For let $G$ be any fixed graph (with at least one edge).  Fix $k \geq 2$, and suppose that we generate a random partition $\cal A$ of the vertices by placing each vertex into one of $k$ parts independently with probability $1/k$.  Then
\begin{equation} \label{eqn.randpart}
{\mathbb E}[q_{\cal A}(G)] <  0.
\end{equation}
Thus comparing the modularity of a partition which we find to that of a random partition is likely to give a false positive! To see why~(\ref{eqn.randpart}) holds, observe that for an edge $e$ in $G$ the probability that both endpoints are placed in the same part is $1/k$, so ${\mathbb E}[q_{\cal A}^E(G)]=1/k$; and by~(\ref{eqn.degtaxreg}) the degree tax is always at least $1/k$, and sometimes larger. 

\smallskip
In a companion paper~\cite{ERus} (see also~\cite{thesis}) we prove that there is a phase transition for the modularity of Erd\H{o}s-R\'enyi random graphs. Additionally we show in~\cite{latticeus} that large subgraphs of lattices, and more generally all large graphs which embed in space with `small distortion', have high modularity.

\smallskip

\noindent
{\bf Acknowledgement}  Thanks to Michael Krivelevich for comments at the meeting Combinatorics Downunder in Melbourne in 2016, which led us to Lemma~\ref{lem.ub1}.

\bibliographystyle{plain}

\end{document}